\documentclass[11pt,reqno]{amsart}
\pretolerance=4000
\setlength{\topmargin}{-.25in}
\setlength{\textheight}{9in}
\setlength{\textwidth}{7in}
\setlength{\headheight}{26pt}
\setlength{\headsep}{12pt}
\setlength{\oddsidemargin}{-0.25in}
\setlength{\evensidemargin}{-0.25in}
\usepackage[usenames]{color}
\usepackage{amsmath,pdfsync,verbatim,graphicx,epstopdf,enumerate,fancybox}
\usepackage[colorlinks=true]{hyperref}
\hypersetup{urlcolor=blue, citecolor=red, linkcolor=red}
\usepackage[notref,notcite]{}
\usepackage[framemethod=tikz]{mdframed}
\newtheorem{theorem}{Theorem}[section]
\newtheorem{lemma}[theorem]{Lemma}

\newtheorem{definition}[theorem]{Definition}
\newtheorem{remark}[theorem]{Remark}

\numberwithin{equation}{section}

%\title{Momentum ray tranform for $ n=2 $ any $ m $ }
%\author{Ramesh Manna and Suman Kumar Sahoo}
\newcommand{\I}{\mathrm{i}}
\newcommand{\vf}{\varphi}

\newcommand{\PD}{\partial}

\newcommand{\Fc}{\mathcal{F}}

\newcommand{\Ic}{\mathcal{I}}
\newcommand{\Jc}{\mathcal{J}}

\newcommand{\Sc}{\mathcal{S}}

\newcommand{\Rb}{\mathbb{R}}
\newcommand{\Sb}{\mathbb{S}}

\newcommand{\Beq}{\begin{equation}}
\newcommand{\Eeq}{\end{equation}}
\newcommand{\beq}{\begin{equation*}}
\newcommand{\eeq}{\end{equation*}}
\newcommand{\bal}{\begin{align}}
\newcommand{\eal}{\end{align}}

\renewcommand{\l}{\langle}
\renewcommand{\r}{\rangle}

\newcommand{\bp}{\begin{prob}}
	\newcommand{\ep}{\end{prob}}
\newcommand{\bpr}{\begin{proof}}
	\newcommand{\epr}{\end{proof}}

\usepackage{amsmath,tikz}
\usetikzlibrary{matrix}

\newcommand*\Rn{\mathbb{R}^n}

\newcommand{\sch}{\mathcal{S}}
\newcommand{\tn}{T\mathbb{S}^{n-1}}
\renewcommand{\d}{\mathrm{d}}	

\usepackage[centerlast,small,sc]{caption}
\setlength{\captionmargin}{20pt}
\title[Integral moment transforms in $\Rb^n$]{Injectivity and range description of first $(k+1)$ integral moment transforms over $m$-tensor fields in $\Rb^n$}

\author[Rohit Kumar Mishra and Suman Kumar Sahoo]{Rohit Kumar Mishra$^\ast$ and Suman Kumar Sahoo$^\dagger$}
	
	\subjclass{Primary: 44A12, 45Q05; Secondary: 46F12.}
	% Please provide minimum  5 keywords.
	\keywords{Ray transform, Momentum ray transform, John's conditions, range characterization, inverse problems, tensor analysis.}
%	
%	\thanks{The first author was supported by US NSF grant DMS 1616564 and SERB Matrics Grant,
%MTR/2017/000837.}
%	\thanks{The second author was supported by SERB National Postdoctoral fellowship, PDF/2017/002780.}
%		\thanks{The last author was supported by RFBR Grant ??-??-??????.}
	
	\email{rohit.mishra@uta.edu, rohittifr2011@gmail.com, suman@tifrbng.res.in}
	\address{$^\ast$ University of Texas at Arlington, Arlington, TX, United States
		\newline\indent$^\dagger$TIFR Centre for Applicable Mathematics, Sharada Nagar, Chikkabommasandra,\newline\indent\hspace{0mm} Yelahanka New Town, Bangalore, India}
%\author{V.\ P.\ Krishnan, R.\ Manna, S.\ K.\ Sahoo and V.\ A.\ Sharafutdinov}
%\address{TIFR Centre for Applicable Mathematics}$

%\author{Rohit Kumar Mishra{Department of Mathematics, University of Texas at Arlington, TX 76019, USA. \texttt{rohit.mishra@uta.edu, rohittifr2011@gmail.com}}\and Suman Kumar Sahoo{Tata Institute of Fundamental Research, Centre For Applicable Mathematics, Bangalore, India. \texttt{suman@tifrbng.res.in}}}
%\author{Rohit Kumar Mishra and Suman Kumar Sahoo}

\begin{document}
	\maketitle
\begin{abstract}
	In this work, we prove a new decomposition result for rank $m$ symmetric tensor fields which generalizes the well known solenoidal and potential decomposition of tensor fields. This decomposition is then used to describe the kernel and to prove an injectivity result for first $(k+1)$  integral moment transforms of symmetric $m$-tensor fields in $\Rb^n$. Additionally, we also present  a range characterization for first $(k+1)$ integral moment transforms in terms of the John's equation.
%This settles the study (kernel and range) of integral moment transform in Schwartz space.
\end{abstract}
% \textbf{Keywords:} Ray transform, integral moment transforms, inverse problems, tensor analysis, range characterization, John's equation\\
% 
% \noindent \textbf{Mathematics Subject Classification.} Primary: 44A12, 45Q05; Secondary: 46F12
	\section{Introduction}
The space of covariant symmetric $m$-tensor fields on $\Rb^n$ with components in Schwartz space $\Sc(\Rb^n)$ will be denoted by  $\Sc(S^m)$. In standard Euclidean coordinates, any element $f \in \mathcal{S}(S^m)$ can be written as
$$ f(x) = f_{i_1\dots i_m}(x) dx^{i_1} \cdots dx^{i_m}$$
with $f_{i_1 \dots i_m} \in \Sc(\Rb^n)$ are symmetric in its components. For repeated indices, Einstein summation convention will be assumed throughout this article. Also, we will not distinguish between covariant and contravariant tensors as we are working with the Euclidean metric. 

%Further, the coordinates of a vector or of tensor will be denoted either upper or lower indices.  
The space of oriented lines in $\Rb^n$ is parametrized by points of the tangent bundle of unit sphere $\Sb^{n-1}$ and it is denoted by
$$T{\Sb}^{n-1}=\{(x,\xi)\in{\Rb}^n\times{\Rb}^n\mid |\xi|=1,\langle x,\xi\rangle=0\}.
$$
For each $(x,\xi)\in T{\Sb}^{n-1}$, we have a unique line $\{x+t\xi\mid t\in{\Rb}\}$ passing through point $x$ and in the direction $\xi$.

% The longitudinal ray transform (LRT) of a symmetric $m$-tensor field $f \in \Sc(S^m)$ is the function $If$ defined on $\tn$ and given by 
% \begin{align*}
%     If(x,\xi)=\int\limits_{-\infty}^\infty \langle f(x+t\xi),\xi^m\rangle dt = \int\limits_{-\infty}^\infty  f_{i_1\dots i_m}\xi^{i_1} \cdots \xi^{i_m} dt.
% \end{align*}
For a non-negative integer $q \geq 0$, the $q$-th integral moment transform of a symmetric $m$-tensor field is the function $I^q :{\Sc}(S^m)\rightarrow{\Sc}(\tn)$ given by  \cite{Sharafutdinov_Generalized_Tensor_Fields}:
\begin{equation}\label{eq:definition of momentum ray transform}
(I^q f)(x,\xi)=\int\limits_{-\infty}^\infty t^q\langle f(x+t\xi),\xi^m\rangle dt = \int\limits_{-\infty}^\infty t^q f_{i_1\dots i_m}(x+t\xi)\,\xi^{i_1} \cdots \xi^{i_m} dt.
\end{equation}
In the above equation, $\langle f, \xi^m \rangle$ actually means $\langle f, \xi^{\otimes m} \rangle$,  where $\xi^{\otimes m}$ denotes $m$-times tensor product of $\xi$ with itself. 
\vspace{2mm}

\noindent The collection of first $(k+1)$ integral moment transforms of $f \in \Sc(S^m)$ is  denoted by $\Ic^k f$, more specifically, the operator $\Ic^k: \mathcal{S}(S^m)\rightarrow \left(\mathcal{S}(\tn)\right)^{k+1}$ defined by
\begin{align}\label{eq:definition of Ik moment transforms}
 \Ic^k(f)(x, \xi) =  (I^0 f(x, \xi),I^1 f(x, \xi),\dots, I^k f(x, \xi)), \quad \mbox{ for } (x, \xi) \in \tn.   
\end{align} 
% \begin{definition}\label{def:q-th integral moment transform}
% For a non-negative integer $q \geq 0$, we define the $q$-th integral moment transform $I^q :{\Sc}(S^m)\rightarrow{\Sc}(\tn)$ by  
% \begin{equation}\label{eq:definition of momentum ray transform}
% (I^q f)(x,\xi)=\int\limits_{-\infty}^\infty t^q\langle f(x+t\xi),\xi^m\rangle dt = \int\limits_{-\infty}^\infty t^q f_{i_1\dots i_m}\xi^{i_1} \cdots \xi^{i_m} dt 
% \end{equation}
% \end{definition}
The zeroth integral moment transform $I^0$ or $\Ic^0$ coincides with the well known longitudinal ray transform (also known as ray transform) of  symmetric $m$-tensor fields in $\Rb^n$. The problem of inverting the longitudinal ray transform (LRT) is primarily motivated from their appearance in several imaging problems, notably in medical imaging, seismic imaging, ocean imaging and many more. It is well known \cite{Sharafutdinov1994} that the LRT has  a non-trivial kernel (containing all potential tensor fields with certain decay at infinity) which tells that one cannot recover the  entire tensor field just from LRT data. On the other hand, the solenoidal part $f^s$ of a symmetric $m$-tensor field $f$ can be determined uniquely from the knowledge of $\Ic^0 f$. In this regard, explicit reconstruction algorithms have been studied by many researchers in various settings, please see \cite{Denisjuk1994,Denisjuk_Paper,Helgason_Book,Katsevich2006,Katsevich2007, Katsevich2013,Monard_Mishra_2019,Monard1,Monard2,Monard,Monard_2019,Palamodov2009,Schuster2000,Sharafutdinov2007,Svetov2012,Tuy1983,Vertgeim2000} and references therein. In addition to these explicit schemes,  approximate inversion methods (such as microlocal inversion) have also been developed extensively to recover the solenoidal part a symmetric $m$-tensor field, see \cite{Boman-Quinto-Duke,Boman1993,Greenleaf-Uhlmann-Duke1989,GU1990c,Krishnan2009a,Venky_and_Rohit,Krishnan-Quinto,Krishnan2009,Lan2003,Ramaseshan2004}.
%\textbf{Application:}(May be we can add more here.) \tblue{For $m=0$ (scalar case) the ray transform $I^0$ is basis of many imaging methods such as CT and PET imaging. Also ray transform appears in the study of Calder\"on problem, where one tries to recover a unknown function/vector field inside a domain from the measurement made on the boundary. Nevertheless, for the scalar case, the operators $I^k\,(k>0)$ appear in the study of cone transform \cite{Kuchment_Fatma}, which has many application in Compton cameras. Recently in \cite{}, the  authors prove that the operators $\Ic^k$ appear naturally in the study of inverse boundary value problem for poly-harmonic operators. }

It is evident from the non-injectivity of LRT that one needs more information (in addition to LRT) for the full recovery of a tensor field. In 1984, Sharafutdinov \cite{Sharafutdinov_Generalized_Tensor_Fields} introduced integral moment transforms (see \eqref{eq:definition of momentum ray transform}) and showed that the collection of first $(m+1)$ integral moment transforms, $\Ic^m$, is injective over symmetric $m$-tensor fields in $\Rb^n$. 
%$f$ can be  determined  completely from the knowledge of its first $(m+1)$-integral moment transforms, that is, from the data $\Ic^m f$.
 For the scalar case $ (m=0) $, the integral moment transforms $ I^k\, (k>0) $  appear in the study of inversion of cone transforms and conical Radon transforms, see \cite{haltmeier2020conical,Kuchment_Fatma,Markus_Moon_conical_Radon_trans} and references there in. And the latter transforms arise in image reconstruction from the data obtained by  Compton cameras, which have potential applications in medical and industrial imaging. In \cite{Anuj_Rohit}, authors proved a support theorem and an injectivity result for first $(m+1)$ integral moment transforms of symmetric $m$-tensor fields on  simple real analytic Riemannian manifolds. Then in \cite{Francois_Rohit_Venky}, authors gave an inversion formula for integral moment transforms on a simple Riemannian surface. Later in the article \cite{Rohit_Kumar_Mishra}, author presented an explicit scheme for the recovery of a vector field in $\Rb^n$ using $n$-dimensional restricted data of first 2-integral moment transform of the unknown vector field. Most recently in a couple of papers \cite{Krishnan2018,Krishnan2019a}, authors studied first $(m+1)$ integral moment transforms and its properties  over $m$-tensor fields in a great detail. In \cite{Krishnan2018}, authors proved the invertibility together with stability estimates for the collection of first $(m+1)$ integral moment transform $\Ic^m$. In their second paper \cite{Krishnan2019a}, authors gave a detailed description of range for the operator $\Ic^m$.   

To the best of our knowledge, the study on the transform $\Ic^k$ over rank symmetric $m$-tensor fields is limited to cases $k=0$ and $k=m$ only. The current article addresses injectivity and range characterization questions for the intermediate cases $0 < k < m$ of the operator $\Ic^k$. % for  $0 \le k \leq m$.} %In this article, we deal with $\Ic^k$ for any $0 \leq k \leq m$ and prove generic injectivity and range characterisation results.
It is well known that a symmetric $m$-tensor field $f$ can be decomposed uniquely into its potential part and solenoidal part. This decomposition is not closed in the sense that the solenoidal and the potential components of a tensor field $f$ are not in the Schwartz space even if $f$ is in the Schwartz space. Therefore, it is not possible to apply an iterative scheme (similar to \cite{Anuj_Rohit}) on the decomposition. %$I^m$ to $I^{m-1}$ and use the result known for $I^m$ to conclude something about $I^{m-1}$. % to  decomposition on the potential field and argue repeatedly. 
To overcome this difficulty, we introduce $k$-potential tensor fields and $k$-solenoidal tensor fields  (see Definition \ref{def:k-potential and k-solenoidal}) by extending classical notions of potential and solenoidal tensor fields respectively. Then, we prove a decomposition result (see Theorem \ref{th:decomp_in_original_sp}) which shows that any symmetric $m$-tensor field $f$ can be decomposed uniquely into a $k$-potential tensor field and a $k$-solenoidal tensor field. 
   % \item \textit{Injectivity result:} 
 With the help of this decomposition theorem, we provide an explicit description of the kernel for the operator $\Ic^k$, see Theorem \ref{th:injectivity result}. Additionally, we also prove that the operator $\Ic^k$ is  injective over $(k+1)$-solenoidal tensor fields. 
 Our injectivity result generalizes the existing injectivity results for $\Ic^0$ (injective over solenoidal tensor fields) and $\Ic^m$ (injective over $m$-tensor fields).
%\item \textit{Range characterization:}

 Apart from injectivity and invertibility issues, the range characterization questions are also very important in the field of integral geometry. %a Since the range characterization of some integral transforms is the traditional subject in integral geometry.  
For instance, the knowledge of range is essential in order to project measured data on the range before applying inversion algorithms. 
%Also range characterizations are necessary to project the data on the range obtained form some measurement operator before inverting the functions/tensor fields. 
The second order differential operator (also known as John operator)
\begin{align}\label{def:John equation}
	J_{ij}=\frac{\PD^2}{\PD x^i\PD\xi^j}- \frac{\PD^2}{\PD x^j\PD\xi^i} \quad 1\le i,j\le n
\end{align}
shows up in the range characterization results for ray transform of functions  by Helgason  \cite{Helgason_Book} and of tensor fields  by Sharafutdinov \cite{Sharafutdinov1994} in $\mathbb{R}^n (n\ge 3)$. The John differential equation was first introduced by Fritz John \cite{John_Fritz_ultrahyperbolic} to study ultrahyperbolic differential equations in $\mathbb{R}^3$. %Later John's equations were used  to study the range characterization of ray transform for functions \cite{Helgason_Book} by Helgason  and for tensor fields \cite{Sharafutdinov1994} by Sharafutdinov  in $\mathbb{R}^n (n\ge 3)$.
The final goal of this article is to give a detailed description of the range for the operator $\Ic^k$ in terms of John's differential equations, see Theorem \ref{th:range characterisation for I-k}.

%Finally, we would like to mention that even though proof of some lemmas in this article are inspired from the works \cite{Krishnan2018,Krishnan2019a} but the results (kernel description, range characterization) and underlying ideas (decomposition of tensor field \eqref{decomp_of_f}, \eqref{def:of f}) used in this article are completely new.
The rest of the article is organized as follows. In section \ref{sec:Def and notation}, we introduce some definitions and notation used throughout this work. Section \ref{sec:Decomposition Results} is devoted to the proof of decomposition theorem of symmetric $m$-tensor fields. The injectivity results and kernel description is discussed in section \ref{sec: Injectivity results}. Finally, section \ref{sec:range characterization} contains the proof of range characterization for the integral moment transform $\Ic^k$. \\

\noindent \textbf{Acknowledgements.} The second author would like to thank Jenn-Nan Wang for suggesting this problem during his visit to Taiwan in 2018 and he would also like to express his sincere gratitude towards  Vladimir A. Sharafutdinov for introducing the subject of this paper. Both authors would like to thank  Venky P. Krishnan for several fruitful discussions on the results of this article which helped us to improve the manuscript.\\

\noindent \textbf{Funding:} Both authors benefited from the  Venky P. Krishnan's SERB Matrics grant MTR/2017/000837.

\section{Definitions and notation}\label{sec:Def and notation}
In this section we introduce some important definitions and notation used throughout this article. Most of these definitions and notation can be found in the book ``Integral geometry of tensor fields" by Sharafutdinov \cite{Sharafutdinov1994} and also in the article \cite{Krishnan2018}.
\subsection{Some differential operators} Let $T^m(\Rb^n)$ denotes the space of $m$-tensors on $\Rb^n$. There is a natural projection of $T^m(\Rb^n)$ onto the space of symmetric tensors $S^m(\Rb^n)$, $\sigma : T^m(\Rb^n) \rightarrow S^m(\Rb^n)$ given by  
\begin{align}\label{eq:definition of sigma}
(\sigma v)_{i_1\dots i_m} = \frac{1}{m!}\sum_{\pi \in \Pi_m} v_{\pi(i_1)\dots \pi(i_m)} 
\end{align}
where $\Pi_m$ is the set of permutation of order $m$.\vspace{2mm}\\
For $x \in \Rb^n$,  we define the \textit{symmetric multiplication operators} $i_x :S^m(\Rb^n)  \rightarrow S^{m+1}(\Rb^n)$ by 
$$(i_x f)_{i_1i_2\dots, i_{m+1}} =\sigma(i_1, \dots, i_m, i_{m+1})(x_{i_{m+1}}f_{i_{1}i_{2}\dots i_{m}}).$$
In the same spirit, we also define the dual of $i_x$, \textit{the convolution operator}, $j_x :S^m(\Rb^n)  \rightarrow S^{m-1}(\Rb^n)$ by 
$$(j_x f)_{i_1i_2\dots i_{m-1}} =f_{i_1i_2\dots i_m}x^{i_m}.$$ 
The composition of these operators will be essential in the next section to prove the decomposition theorem and hence for the convenience of reader, we introduce the operators $i_{x^{\otimes k}} :  S^m(\Rb^n) \rightarrow S^{m+k}(\Rb^n)$ and $j_{x^{\otimes k}} :  S^{m+k}(\Rb^n) \rightarrow S^{m}(\Rb^n)$, for any fixed integer $k \geq 1$, as follows:
\begin{align*}
\left(i_{x^{\otimes k}}f\right)_{i_1i_2\dots i_{m+k}} &=\sigma(i_1, \dots, i_m \dots i_{m+k})(x_{i_{m+1}\dots x_{i_{m+k}}}f_{i_{1}i_{2}\dots i_{m}})\\
(j_{x^{\otimes k}}f)_{i_1i_2\dots i_{m}} &=x^{i_{m+1}}\dots x^{i_{m+k}}f_{i_{1}i_{2}\dots i_{m}i_{m+1}\dots i_{m+k} }.
\end{align*}
\noindent Next, we define two important first order differential operators on $C^\infty(S^m)$,  the space symmetric $m$-tensor fields whose components are $C^\infty$ smooth.  The operator of \textit{inner differentiation} or \textit{symmetrized derivative} $\d:C^\infty(S^m)\rightarrow C^\infty(S^{m+1})$ is defined by
% 	\begin{align}
% 	\D:C^\infty(S^m)\rightarrow C^\infty(S^{m+1})
% 	\end{align}
	%is defined by 
	$$(\d u)_{i_1\dots i_mi_{m+1}} = \sigma(i_1, \dots , i_m) \left(\frac{\partial u_{i_1\dots i_m} }{\partial x_{i_{m+1}}}\right)$$
	where $\sigma$ is defined in equation \eqref{eq:definition of sigma}. \vspace{2mm}\\
	\noindent The \textit{divergence} operator $\delta:C^\infty(S^{m})\rightarrow C^\infty(S^{m-1})$ is 
	defined by the formula $$ (\delta u)_{i_1\dots i_{m-1}} = \sum_{j=1}^n \frac{\partial u_{i_1\dots i_{m-1}j} }{\partial x_{j}}.$$

 \subsection{Some properties of moment ray transforms}
 Note that the definition of $q$-th integral moment transform $I^q$ will make sense if we define them to $\Rb^n \times \Rb^n \setminus \{0\}$. For later use, we define the operator $J^q: \Sc(S^m) \longrightarrow C^\infty(\Rb^n \times (\Rb^{n} \setminus \{0\}))$ by extending $I^q$ to  $ \Rn \times \Rn\setminus\{0\} $ 
 \begin{equation}\label{eq:definition of Jk}
J^q f(x,\xi)=\int\limits_{-\infty}^\infty t^q\langle f(x+t\xi),\xi^m\rangle \, d t \quad\mbox{for}\quad(x,\xi)\in \mathbb{R}^{n} \times \mathbb{R}^{n}\setminus \{0\}.
\end{equation} 
It has been shown in \cite{Krishnan2018} that the data $(I^0 f, I^1 f, \dots , I^k f)$ and $(J^0 f, J^1 f, \dots , J^k f)$ are equivalent for any $0 \leq k \leq m$ and there is a explicit relation between these operators	\begin{equation}\label{eq:relation between Ik and Jk}
	(J^q\!f)(x,\xi)=|\xi|^{m-2q-1}\sum\limits_{\ell=0}^q(-1)^{q-\ell}{q\choose\ell}|\xi|^\ell \langle\xi,x\rangle^{q-\ell}\,(I^\ell\!f)
	\left(x-\frac{\langle x, \xi \rangle}{|\xi|^2}\xi,\frac{\xi}{|\xi|}\right).
	\end{equation}
In certain instances, it will be more convenient to work with the operator $J^q$ instead of $I^q$. One clearly evident advantage of working with functions $ J^k f $ is that the partial derivatives $ \frac{\PD}{\PD\xi^{i}} $ and $\frac{\PD}{\PD x^{i}}$ are well defined on $ J^k f $ for $ k=0,1,\dots,m$. \vspace{2mm} 	

\noindent The Fourier transform of a symmetric $m$-tensor field $f \in \Sc(S^m)$ is defined component-wise, that is, 
\begin{align*}
 \widehat{f}_{i_1\dots i_m}(y)  = \widehat{f_{i_1\dots i_m}}(y), \quad y \in \Rb^n  
\end{align*}
where $\widehat{h}(y)$ denotes the usual Fourier transform of a scalar function $h$ defined on $\Rb^n$. \vspace{2mm} 

\noindent The Fourier transform  $\Fc :\Sc(\tn) \longrightarrow \Sc(\tn)$ is defined as follows, see \cite[Section 2.1]{Sharafutdinov1994}:
\begin{align}\label{eq:Fourier transform on sphere bundle}
    \Fc (\vf) (y, \xi) =    \widehat{\vf}(y, \xi) = \frac{1}{(2 \pi)^{(n-1)/2}}\int_{\xi^\perp} e^{-i x\cdot y} \vf(x, \xi)\, dx
\end{align}
where $dx$ is the $(n-1)$-dimensional Lebesgue measure on the hyperplane $\xi^\perp = \{ x\in \Rb^n : \l x,\xi \r = 0\}.$

\noindent This definition of Fourier transform is used to compute the following Fourier transform of $q$-th integral moment transform of $f$:
\begin{align*}
    \widehat{I^q f}(y, \xi) = (2 \pi)^{1/2} i^q \langle \xi, \partial_y\rangle^q \langle \widehat{f}(y), \xi^m\rangle.
\end{align*}
For $q=0$, the above equality reduces to
\begin{align*}
    \widehat{I f}(y, \xi) = (2 \pi)^{1/2} \langle \widehat{f}(y), \xi^m\rangle.
\end{align*}
\section{Decomposition results}\label{sec:Decomposition Results}
We start this section  by defining two special tensor fields which are generalizations of the solenoidal tensor fields and potential tensor fields respectively. 
\begin{definition}[$k$-solenoidal and $k$-potential tensor fields]\label{def:k-potential and k-solenoidal}
For any fixed $1 \leq k \leq m$, a symmetric $m$-tensor field $f \in C^\infty(S^m)$ is said to be 
\begin{enumerate}
    \item $k$-solenoidal tensor field if 
    \begin{align*}
    \delta^k f = 0.
\end{align*}
\item $k$-potential tensor field if there exists a $(m-k)$-tensor field $v \in C^\infty(S^{m-k})$
such that    \begin{align*}
  f = \d^k v.
\end{align*}
\end{enumerate}
\end{definition}
\noindent For $k=1$, the $k$-solenoidal and $k$-potential tensor fields coincide with the usual solenoidal and potential tensor fields respectively. 

The goal of this section is to prove that any symmetric $m$-tensor field can be decomposed uniquely into its $k$-solenoidal part and $k$-potential part. This decomposition theorem extends the result \cite[Theorem 2.6.2]{Sharafutdinov1994} which gives a unique decomposition of a symmetric $m$-tensor fields into its solenoidal part and potential part. In fact both these results can be viewed as generalizations of the well-known Helmholtz (the name Helmholtz-Hodge decomposition also used widely) decomposition of vector field into divergence free part (solenoidal part) and curl free part (potential part). \vspace{2mm}\\
To prove the main decomposition result of this section, we need the following two lemmas:
% Next, we state a Lemma which provides the aforementioned decomposition in the Fourier space, we skip the proof of this Lemma as it can be achieved directly from the duality of linear operators  $ i_{x^{\otimes  k}}: S^m\longrightarrow S^{m+k} $ and $ j_{x^{\otimes  k}}:S^m\longrightarrow S^{m-k}$, see also \cite[Lemma 2.6.1]{Sharafutdinov_Book}.
\begin{lemma}\label{th: decomposition of f in frequency variable} 
	Let $f$ be a symmetric $m$-tensor field in $\Rb^n$ and $x \in \Rb^n$ be a non-zero vector. Then for $0 \leq k  \leq m$, there exist symmetric $m$-tensor field $g$ and symmetric $(m-k)$-tensor field $v$ such that the following decomposition of $f$ holds:
	\begin{align}\label{eq: decomposition of f in frequency variable}
	f  = g + i_{x^{\otimes k}} v 
	\end{align}
	where $g$ satisfies $j_{x^{\otimes k}} g =0$ and given by
	 	 	\begin{align}\label{eq: expression of g}
	 	&g_{i_1i_2\cdots i_m}\nonumber\\
	 	&= \sigma(i_1, \dots , i_m)\left( \delta_{i_1}^{j_1}\cdots \delta_{i_k}^{j_k} - \frac{x_{i_1}\dots x_{i_k}x^{j_1}\dots x^{j_k}}{|x|^{2k}}\right)\left(\delta^{j_{k+1}}_{i_{k+1}} - \frac{x^{j_{k+1}}x_{i_{k+1}} }{|x|^2}\right)\cdots \left(\delta^{j_{m}}_{i_{m}} - \frac{x^{j_{m}}x_{i_{m}} }{|x|^2}\right)f_{j_1 j_2 \dots j_m}.	
	 %	&= \mu^{j_1 j_2 \dots j_k}_{i_1 i_2 \dots i_k}\mu^{j_{k+1}}_{i_{k+1}}\cdots \mu^{j_{m}}_{i_{m}}f_{j_1 j_2 \dots j_m}.
	 	\end{align}	 
	\end{lemma}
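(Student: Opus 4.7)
The plan is to prove the decomposition in two stages: first establish abstract existence and uniqueness via a direct sum decomposition, then verify the explicit formula for $g$.

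For the abstract structure, I would use the splitting $\Rb^n = \mathrm{span}(x)\oplus x^\perp$ to decompose
\[
S^m(\Rb^n) = \bigoplus_{j=0}^{m} V_j, \qquad V_j := \mathrm{span}\{\sigma(x^{\otimes j}\otimes w) : w\in S^{m-j}(x^\perp)\}.
\]
Because contractions with $x$ annihilate $x^\perp$-slots, a direct calculation shows $\ker(j_{x^{\otimes k}}) = V_0\oplus\cdots\oplus V_{k-1}$ and $\mathrm{image}(i_{x^{\otimes k}}) = V_k\oplus\cdots\oplus V_m$; these two subspaces are complementary and together span $S^m(\Rb^n)$. This, combined with the injectivity of $i_{x^{\otimes k}}$ (which uses $x\neq 0$), delivers existence and uniqueness of the pair $(g, v)$.

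Next I verify that the explicit formula satisfies $j_{x^{\otimes k}} g = 0$. Let $\mathcal{O}$ denote the bracketed operator in the formula, so $g = \sigma(\mathcal{O} f)$, where $\mathcal{O}$ is an ``$A$''-factor acting on the first $k$ slots times ``$P$''-factors acting on each of the last $m-k$ slots individually. Since $\mathcal{O} f$ is symmetric in its first $k$ and last $m-k$ arguments separately, the outer symmetrization $\sigma$ reduces to an average over $k$-subsets $S\subset[m]$ indicating which positions carry the $A$-block. Contracting $g$ with $x^{\otimes k}$ in $k$ slots: if any contraction lands on a $P$-slot, the identity $Px=0$ kills the term; otherwise all $k$ contractions lie in the $A$-block, and the definition of $A$ is arranged precisely so that $x^{i_1}\cdots x^{i_k} A^{j_1\cdots j_k}_{i_1\cdots i_k} = 0$.

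Finally, the direct-sum structure forces $f - g\in\mathrm{image}(i_{x^{\otimes k}})$, and $v$ can be extracted explicitly by applying $j_{x^{\otimes k}}$ to the equation $f - g = i_{x^{\otimes k}} v$ and inverting the symmetric positive operator $j_{x^{\otimes k}}\circ i_{x^{\otimes k}}$ on $S^{m-k}(\Rb^n)$. The main obstacle is the compatibility step: showing that the explicit formula produces exactly the direct-sum projection onto $\ker(j_{x^{\otimes k}})$, not merely an arbitrary element of it. This reduces to checking on each summand $V_j$: on $V_j$ for $j\geq k$ the claim is that the formula returns zero (straightforward, since $P^{\otimes(m-k)}$ kills terms with $x$-factors in the last $m-k$ slots and the $A$-factor removes the residual $x^{\otimes k}$ component), while on $V_j$ for $j<k$ the formula must act as the identity; this latter step is a delicate combinatorial verification because the $A$-factor treats the first $k$ slots collectively rather than as a product of individual $P$-projections, and one must carefully track the symmetrization coefficients to see that all correction terms cancel.
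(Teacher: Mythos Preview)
Your abstract existence-and-uniqueness argument is correct but takes a different route from the paper. The paper does not actually give a proof: it says only that the lemma ``can be achieved directly from the duality of the linear operators $i_{x^{\otimes k}}$ and $j_{x^{\otimes k}}$'' and cites Sharafutdinov's Lemma~2.6.1. The intended argument is that $i_{x^{\otimes k}}$ and $j_{x^{\otimes k}}$ are mutually adjoint with respect to the natural inner product on symmetric tensors, whence $\ker(j_{x^{\otimes k}}) = (\mathrm{image}\, i_{x^{\otimes k}})^{\perp}$; the orthogonal splitting $S^{m}(\Rb^{n}) = \ker(j_{x^{\otimes k}}) \oplus \mathrm{image}(i_{x^{\otimes k}})$ together with injectivity of $i_{x^{\otimes k}}$ then delivers the decomposition in one line. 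Your approach through the explicit grading $S^{m} = V_{0}\oplus\cdots\oplus V_{m}$ reaches the same splitting---indeed your $V_{j}$'s are pairwise orthogonal, so the two decompositions coincide---but the duality argument is what the paper has in mind and is considerably shorter.

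On the explicit formula for $g$: the paper does not verify it at all, so your attempt already goes beyond the paper's treatment. Your argument that $j_{x^{\otimes k}}g=0$ (via $Px=0$ on any $P$-slot and $x^{\otimes k}\cdot A=0$ on the $A$-block) is correct. You are also right that the residual compatibility step---checking that the formula acts as the identity on each $V_{j}$ with $j<k$---is where all the content lies; the asymmetric way the formula handles the first $k$ slots versus the last $m-k$ makes this genuinely delicate, and it is not addressed anywhere in the paper.
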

We skip the proof of this lemma as it can be achieved directly from the duality of the linear operators  $ i_{x^{\otimes  k}}: S^m(\Rn)\longrightarrow S^{m+k}(\Rn) $ and $ j_{x^{\otimes  k}}:S^m(\Rn)\longrightarrow S^{m-k}(\Rn)$, see also \cite[Lemma 2.6.1]{Sharafutdinov1994}.
\begin{lemma}
Let  $ f\in \sch(S^m)$ be a symmetric $m$-tensor field and  $g$, $v$ be as in the above Lemma \ref{th: decomposition of f in frequency variable}. Then for any multi-index $ \alpha $, the following identities hold:
		\begin{align}\label{estimate_for_g}
		D^{\alpha} g_{i_1i_2\dots i_m}(x)=  |x|^{-2(|\alpha|+m)}\sum\limits_{|\beta|\le |\alpha|} P^{\alpha j_1 \dots j_m}_{\beta i_1 \dots i_m}(x) D^{\beta} f_{j_1 \dots j_m}(x),
	\end{align}
	\begin{align}\label{esti_for_v}
	D^{\alpha} v_{i_1 \dots i_{m-k}}(x)=  |x|^{-2(|\alpha|+m)}\sum\limits_{|\beta|\le |\alpha|} Q^{\alpha j_1 \dots j_m}_{\beta i_1 \dots i_{m-k}}(x) D^{\beta} f_{j_1 \dots j_m}(x)
	\end{align}
	where $ P^{\alpha j_1 \dots j_m}_{\beta i_1 \dots i_m}(x) $ and $ Q^{\alpha j_1 \dots j_m}_{\beta i_1 \dots i_{m-k}}(x) $ are homogeneous polynomial of degree $ (2m+|\alpha|+|\beta|) $ and $ (2m+|\alpha|+|\beta|-k) $ respectively. Also, $ D=(D_1,\dots,D_n) $, $ D_{j}=-\I \PD_{x_j} $. 
\end{lemma}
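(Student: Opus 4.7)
The plan is to reduce both identities to the Leibniz rule after $g$ and $v$ have been put into a canonical rational form.

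\emph{Step 1. Rational form of $g$ and $v$.} From \eqref{eq: expression of g} one observes that the factor
$\left(\delta_{i_1}^{j_1}\cdots\delta_{i_k}^{j_k} - x_{i_1}\cdots x_{i_k} x^{j_1}\cdots x^{j_k}/|x|^{2k}\right)$
is a polynomial of degree $2k$ in $x$ divided by $|x|^{2k}$, and each of the remaining $m-k$ standard projectors $\left(\delta_{i_\ell}^{j_\ell} - x_{i_\ell} x^{j_\ell}/|x|^2\right)$ is a polynomial of degree $2$ divided by $|x|^2$. Multiplying and clearing denominators yields
\[
g_{i_1\cdots i_m}(x) \;=\; |x|^{-2m}\, \widetilde{P}^{j_1\cdots j_m}_{i_1\cdots i_m}(x)\, f_{j_1\cdots j_m}(x),
\]
where $\widetilde{P}$ is homogeneous of degree $2m$. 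Since $v$ is uniquely determined by the relation $i_{x^{\otimes k}} v = f - g$, and since the composition $j_{x^{\otimes k}} i_{x^{\otimes k}}$ is invertible on symmetric $(m-k)$-tensors away from $x=0$ (by a rational expression in $x$ — the duality computation behind Lemma~\ref{th: decomposition of f in frequency variable}), an analogous procedure yields
\[
v_{i_1\cdots i_{m-k}}(x) \;=\; |x|^{-2m}\, \widetilde{Q}^{j_1\cdots j_m}_{i_1\cdots i_{m-k}}(x)\, f_{j_1\cdots j_m}(x),
\]
with $\widetilde{Q}$ homogeneous of degree $2m-k$; the drop by $k$ reflects the $k$ extra powers of $x$ absorbed by $i_{x^{\otimes k}}$ when one reconstructs $f - g$ from $v$.

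\emph{Step 2. Derivatives of $|x|^{-2m}\cdot(\text{polynomial})$.} The second ingredient is the auxiliary claim: for any polynomial $R(x)$ of degree $d$ and any multi-index $\gamma$,
\[
D^\gamma\!\bigl(|x|^{-2m} R(x)\bigr) \;=\; |x|^{-2(m+|\gamma|)}\, S_\gamma(x),
\]
where $S_\gamma$ is a polynomial of degree $d+|\gamma|$. This is proved by induction on $|\gamma|$: the inductive step uses $D_j |x|^{-2p} = 2i p\, x_j |x|^{-2p-2}$ together with the observation that both $x_j R(x)$ and $|x|^2 D_j R(x)$ are polynomials of degree $d+1$; putting things over the common denominator $|x|^{-2(m+|\gamma|+1)}$ closes the induction.

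\emph{Step 3. Leibniz.} Applying Leibniz to $D^\alpha g$ and substituting Step~2 gives
\[
D^\alpha g_{i_1\cdots i_m}(x) \;=\; |x|^{-2(m+|\alpha|)} \sum_{\beta \leq \alpha} \binom{\alpha}{\beta}\, |x|^{2|\beta|}\, S^{j_1\cdots j_m}_{\alpha-\beta,\,i_1\cdots i_m}(x)\, D^\beta f_{j_1\cdots j_m}(x),
\]
where each coefficient has degree $2|\beta| + (2m + |\alpha|-|\beta|) = 2m + |\alpha|+|\beta|$. Defining $P^{\alpha j_1\cdots j_m}_{\beta\, i_1\cdots i_m}(x)$ to be this coefficient (extended by zero for $|\beta|\leq|\alpha|$ with $\beta \not\leq \alpha$) recovers \eqref{estimate_for_g}. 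The identical computation with $\widetilde{P}$ (degree $2m$) replaced by $\widetilde{Q}$ (degree $2m-k$) yields \eqref{esti_for_v} with the predicted degree $2m+|\alpha|+|\beta|-k$.

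\emph{Main obstacle.} The only non-routine step is obtaining the explicit rational form of $v$ in Step~1, which requires inverting $j_{x^{\otimes k}} i_{x^{\otimes k}}$ on symmetric tensors. If a clean closed form for that inverse is cumbersome, one may instead argue abstractly: since $f - g$ is $|x|^{-2m}$ times a polynomial of degree $2m$ and lies in $\mathrm{range}(i_{x^{\otimes k}})$, the unique preimage $v$ must itself be $|x|^{-2m}$ times a polynomial of degree $2m - k$. Everything else in the argument is degree bookkeeping under Leibniz.
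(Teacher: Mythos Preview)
Your argument is correct and follows essentially the same idea as the paper: express $g$ (and $v$) as a homogeneous polynomial times a negative power of $|x|^2$ acting on $f$, then differentiate and track degrees. The only organizational difference is that you clear to the common denominator $|x|^{-2m}$ from the outset and invoke Leibniz once, whereas the paper keeps the individual terms with denominators $|x|^{-2p}$ ($0\le p\le m$) and runs an induction on $|\alpha|$ directly; for $v$ both you and the paper reduce to the same step (apply $j_{x^{\otimes k}}$ and argue analogously), and your explicit acknowledgement of the inversion of $j_{x^{\otimes k}} i_{x^{\otimes k}}$ is in fact a bit more transparent than the paper's one-line dismissal.
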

\begin{proof}
Let us start with the observation that if we expand the right hand side of the expression for $g$ given in \eqref{eq: expression of g}, then every term in this expansion will be of the following form:
\begin{align*}
    \frac{x_{i_1}\dots x_{i_p}x^{j_1}\dots x^{j_p}}{|x|^{2p}}f_{j_1\dots j_pi_{p+1}\dots i_m} \quad \mbox{ for some }\quad 0 \leq p \leq m.
\end{align*}
Keeping this observation in mind, we will prove our lemma using induction on  $\alpha$.  For $ |\alpha|=1 $, we get $ D^\alpha= -\I \PD_{x_k} $ for some $\ 1\le k\le n $ and therefore 
\[ D^{\alpha }\left(\frac{x_{i_1}\dots x_{i_p}x^{j_1}\dots x^{j_p}}{|x|^{2p}}\right) =  \frac{\mbox{homogeneous poly of degree} (2p+1)}{|x|^{2(p+1)}}. \]
For $p =m$, the above equality becomes 
\[ D^{\alpha }\left(\frac{x_{i_1}\dots x_{i_m}x^{j_1}\dots x^{j_m}}{|x|^{2m}}\right) =  \frac{\mbox{homogeneous poly of degree} (2m+1)}{|x|^{2(m+1)}}.  \]
Using this, one can easily verify the following equality
 $$ D^{\alpha}g_{i_1i_2\cdots i_m} = \frac{1}{|x|^{2(m+1)}}\sum_{|\beta|=0}^{1} P^{\alpha j_1\dots j_m}_{\beta i_1\dots i_m}(x) D^{\beta} f_{j_1\dots j_m}(x) $$
 where $P^{\alpha j_1\dots j_m}_{\beta i_1\dots i_m}(x)$ is a homogeneous polynomial of degree $ 2m+1+|\beta|$. This shows that our result is true for $|\alpha| =1$. 
 
 Now,  assume that the result is true for $ |\alpha|=k$ then we aim to verify the result for $|\alpha| = k+1$. The idea here is to break $\alpha$ (such that $|\alpha| = k+1$) as $ \alpha= \gamma_1+\gamma_2 $ with $ |\gamma_1|=k $ and $ |\gamma_2| =1$. Then by applying $D^{\gamma_2}$ (which is same as the case $ |\alpha|=1$) to $ D^{\gamma_1}g_{i_1i_2\cdots i_m}$, we get the desired result for $g$.

 Finally to get the estimate for $v$, first we apply $ j_{x^{\otimes k}}$ to the equation \eqref{eq: decomposition of f in frequency variable} and then by again using a similar induction argument on $ \alpha $, we conclude the proof our lemma.
\end{proof}
Now, we are ready to present our decomposition theorem for symmetric $m$-tensor fields, which is one of the key aspects of this article and this decomposition will be used at several places later.  
		\begin{theorem}\label{th:decomp_in_original_sp}
		Let $ f \in \mathcal{S}(S^m)$ be a symmetric $m$-tensor field defined on $\Rb^n$ and  $ 1 \leq k \leq   \min\{n-1, m\}$ be a fixed positive integer. Then there exist uniquely determined smooth symmetric $m$-tensor field $ g $ and $(m-k)$-tensor field $ v $ satisfying  
			\begin{align}\label{decomp_of_f}
			f= g+\d^k v;  \quad \ \delta^{k} g=0,
			\end{align}	
$g(x), v(x) \rightarrow 0 \ \mbox{as}\ |x| \rightarrow \infty$. Additionally, we have the following decay estimates: 
	\begin{align}\label{estimate_for_g_and_v}
				|g(x)| \le C(1 + |x|)^{1-n}; \qquad 
			 |\d^{\ell}v(x)| \le C(1 + |x|)^{k+1-\ell} \quad \mbox{ (for } 0 \leq \ell\leq k).
			 			\end{align}	
The tensor fields $g$ and $v$ will be called the $k$-solenoidal part and the $k$-potential part of $f$ respectively.
		\end{theorem}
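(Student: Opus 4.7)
The plan is to construct $g$ and $v$ through the Fourier transform, reducing the decomposition to the pointwise algebraic splitting provided by Lemma \ref{th: decomposition of f in frequency variable}. The Fourier symbols of the symmetrized derivative $\d^k : C^\infty(S^{m-k}) \to C^\infty(S^m)$ and the divergence $\delta^k : C^\infty(S^m) \to C^\infty(S^{m-k})$ are $\I^k\, i_{y^{\otimes k}}$ and $\I^k\, j_{y^{\otimes k}}$ respectively. Consequently, after Fourier-transforming, the desired decomposition $f = g + \d^k v$ with $\delta^k g = 0$ is equivalent to the pointwise identity $\widehat{f}(y) = \wt{g}(y) + i_{y^{\otimes k}}\wt{v}(y)$ with $j_{y^{\otimes k}}\wt{g}(y) = 0$, which is exactly what Lemma \ref{th: decomposition of f in frequency variable} supplies at every nonzero $y$.

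First I would apply Lemma \ref{th: decomposition of f in frequency variable} pointwise to $\widehat{f}(y)$ (taking the vector argument to be $y$ itself), producing tensor fields $\wt{g}$ and $\wt{v}$ on $\Rb^n \setminus \{0\}$. Then set $g := \Fc^{-1}\wt{g}$ and $v := \Fc^{-1}(\I^{-k}\wt{v})$. The explicit formula \eqref{eq: expression of g} shows that $\wt{g}(y)$ is uniformly bounded (each factor $y_i y^j/|y|^2$ is bounded by $1$), and Schwartz decay of $\widehat{f}$ passes to $\wt{g}$ and $\wt{v}$; both therefore lie in $L^1 \cap L^\infty$, so the inverse Fourier transforms are well-defined smooth functions vanishing at infinity. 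The symbol identities $\widehat{\d^k v} = \I^k i_{y^{\otimes k}}\widehat{v}$ and $\widehat{\delta^k g} = \I^k j_{y^{\otimes k}}\widehat{g}$, combined with the defining relations for $\wt{g}$ and $\wt{v}$, give $f = g + \d^k v$ and $\delta^k g = 0$ on the Fourier side and hence on the physical side.

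The main obstacle is the sharp pointwise decay in \eqref{estimate_for_g_and_v}. Both $\wt{g}$ and $\wt{v}$ are smooth away from the origin but inherit a positively homogeneous of degree zero (conic) singularity at $y=0$ from the projection operators; Lemma 3.3 makes this quantitative, giving $|D^\alpha \wt{g}(y)| \leq C|y|^{-|\alpha|}$ near the origin together with rapid decay at infinity. Standard oscillatory integral estimates, splitting $\Fc^{-1}\wt{g}$ into contributions from a small ball around $0$ and from its complement and then integrating by parts against $e^{\I x\cdot y}$, convert the $|y|^{-|\alpha|}$ singularity into the claimed $(1+|x|)^{1-n}$ spatial decay for $g$; the estimates for $\d^\ell v$ follow by the same argument with the extra symbol factor $\I^\ell i_{y^{\otimes \ell}}$ accounted for. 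For uniqueness, if $f = g_1 + \d^k v_1 = g_2 + \d^k v_2$ with $\delta^k g_j = 0$ and each piece vanishing at infinity, then $h := g_1 - g_2 = \d^k(v_2 - v_1)$ is simultaneously $k$-solenoidal and $k$-potential; passing to Fourier, pointwise uniqueness in Lemma \ref{th: decomposition of f in frequency variable} forces $\widehat{h}(y) = 0$ for all $y\neq 0$, and the decay of $h$ rules out any distributional component supported at the origin, so $h \equiv 0$ and $v_1 = v_2$.
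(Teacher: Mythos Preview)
Your approach is essentially the paper's: decompose $\widehat f$ pointwise via Lemma~\ref{th: decomposition of f in frequency variable}, invert the Fourier transform, and obtain the decay \eqref{estimate_for_g_and_v} by integrating by parts against the conic singularity at the origin quantified by \eqref{estimate_for_g}--\eqref{esti_for_v}; uniqueness likewise matches, reducing to tempered distributions supported at the origin.

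One gap: your claim that $\wt v\in L^\infty$ is false. From \eqref{esti_for_v} with $|\alpha|=0$ (or directly by applying $j_{y^{\otimes k}}$ to the algebraic decomposition) one has only $|\wt v(y)|\le C|y|^{-k}$ near $y=0$, and this singularity is in general genuine. Local integrability of $\wt v$ therefore requires $k<n$, which is exactly where the hypothesis $k\le n-1$ enters---a constraint your write-up never invokes. Once you replace the incorrect $L^1\cap L^\infty$ claim for $\wt v$ by the estimate $|D^\alpha\wt v(y)|\le C|y|^{-|\alpha|-k}$ from \eqref{esti_for_v} and note that $D^\alpha\wt v$ is integrable for $|\alpha|\le n-k-1$, the rest of your argument goes through as in the paper.
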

\begin{proof}[\textbf{Proof of existence}] 
We use the notation $ \widehat{f}(y) $ for the Fourier transform of $f$ which we define component-wise, that is, $$\widehat{f_{i_1\dots i_m}}(y) = \widehat{f}_{i_1\dots i_m}(y).$$
Then, we apply Theorem \ref{th: decomposition of f in frequency variable} to find unique symmetric tensor fields $\widehat{g}$ and $\widehat{v}$, of order $m$ and $(m-k)$ respectively, such that
     \begin{align}\label{1.12}
	\widehat{f }(y)=\widehat{g}(y) + i_{y^{\otimes k}}\widehat{v}(y) \ \ \mbox{and }\    j_{y^{\otimes k}} \widehat{g}(y)=0.	\end{align}
		%with uniquely determined symmetric tensor fields $\widehat{g}$ and $\widehat{v}$ of order $m$ and $(m-k)$ respectively.
Using relations \eqref{estimate_for_g} and \eqref{esti_for_v}  for $\widehat{g}$ and $\widehat{v}$, we have that the both fields $ \widehat{g}(y) $ and $ \widehat{v}(y) $ are smooth on $ \Rn_{0}=\Rn\setminus\{0\} $, decay rapidly as $|y| \rightarrow \infty$. Additionally, we also have the following estimates  for $|y| \leq 1$ and for any multi-index $\alpha =(\alpha_1, \dots, \alpha_n)$:
		\begin{equation}\label{1.13}
		|D^{\alpha}\widehat{g}(y)|\le |y|^{-|\alpha|}, \qquad  	|D^{\alpha}\widehat{v}(y)|\le |y|^{-|\alpha|-k}.
		\end{equation}
		% Since, $ \widehat{g} $ and $ i_{y^{\otimes k}}\widehat{v}$ are in $ L^2(S^m)$.
From above estimates, we see that  $ D^{\alpha} \widehat{g}(y) $ is integrable for $ |\alpha|\le n-1 $ and $  D^{\alpha} \widehat{v}(y) $ is integrable for $ |\alpha| \le n-k-1 $. Hence $g$ and $v$ are smooth under the assumption $ 1 \leq k \leq \min\{m, n-1\}$. Also, by a direct application of the inverse Fourier transform to \eqref{1.12}, we get the following required decomposition:
		\begin{align*}
		f= g+\d^k v;  \quad \ \delta^{k} g=0.
		\end{align*}
%\tred{Where $ g(x)= \widehat{g}(y)^{\vee}, \quad v(x)= (-\I)^k \widehat{v}(y)^{\vee} $ and $ \vee $ denotes the inverse Fourier transform.}
Further, the summability  condition of $\widehat{g}$ and $\widehat{v}$ will give $g(x)$, $v(x) \rightarrow 0$ as $|x| \rightarrow \infty$. 
		%Also, the estimates 
		%$$				|g(x)| \le C(1 + |x|)^{1-n}; \qquad 
		%			 |d^l v(x)| \le C(1 + |x|)^{k+1-l-n} \quad \mbox{ (for } 0 \leq l\leq k)$$
		%			 are directly follow from relation \eqref{1.13}. 
		Thus the only thing remains to show is the following estimates:
		$$				|g(x)| \le C(1 + |x|)^{1-n}; \qquad 
		|\d^\ell v(x)| \le C(1 + |x|)^{k+1-\ell-n} \quad \mbox{ (for } 0 \leq \ell\leq k).$$
		We show the estimate for $g$ in detail and estimate for $v$ can be achieved by similar arguments. We start by writing $ g $ in terms of Fourier inversion formula as follows:
		\begin{align*}
		g(x)&=\int_{\Rn} e^{\I x\cdot y} \widehat{g}(y) d y\\
		\Longrightarrow \qquad  x^{\alpha} g(x)&=  (-\I)^{|\alpha|}\int_{\Rn} \widehat{g}(y)D_{y}^{\alpha} e^{\I x\cdot y}\   d y.
		\end{align*} 
As $ \widehat{g}(y)$ is not smooth at the origin, in order to apply the integration by parts in the above identity, we rewrite the above integral in the following way:
		\begin{align*}
		x^{\alpha} g(x)&=  (-\I)^{|\alpha|}\lim_{\epsilon \rightarrow 0}\int_{|y|\geq \epsilon} \widehat{g}(y)D_{y}^{\alpha} e^{\I x\cdot y}\   d y\\
		&= \I^{|\alpha|}\lim_{\epsilon \rightarrow 0}\left( \int_{|y|\ge \epsilon} D_{y}^{\alpha}(\widehat{g}(y)) e^{\I x\cdot y}\ d y -\int_{|y|=\epsilon}D_{y}^{\alpha}(\widehat{g}(y)) \nu^{\alpha} e^{\I x\cdot y}\ d \sigma(y)\right).
		\end{align*}
Using inequality $ |D^{\alpha}\widehat{g}(y)|\le |y|^{-|\alpha|}$ from \eqref{1.13}, we conclude that $\lim\limits_{\epsilon \rightarrow 0} \int_{|y|=\epsilon}D_{y}^{\alpha}(\widehat{g}(y)) \nu^{\alpha} e^{\I x\cdot y}\ \d \sigma(y)  $ equals to $ 0 $ for $ |\alpha|\le n-2 $  and constant for $|\alpha| = n-1  $. Additionally, we have $ D^{\alpha} \widehat{g} \in L^1(\Rn) $ for $ |\alpha|\le n-1 $ gives
\begin{align*}
|x^{\alpha} g(x)|\le & C_{\alpha},
\end{align*}
where $ C_{\alpha} $
is a constant depending only on the multi-index $ \alpha $. Taking sum over $ |\alpha| $ from $ 0 $ to $ n-1$ and using the fact that $ (1+|x|)^{n-1}  $ and $ \sum\limits_{|\alpha|=0}^{n-1} |x^{\alpha}|$ are comparable, we get the estimate
\begin{align*}
|g(x)|\le C (1+|x|)^{1-n}. 
\end{align*}
Similar argument will work to derive the estimate of $ v $ and its derivatives. This finishes the proof of existence. \\
		%\textbf{Suman's comment: I think we should give some sketch of the proof for $ v $, because the condition $ k\le \min\{m,n-1\} $ appears quite naturally there. }
\noindent \textbf{\textit{Proof of uniqueness.}} Assume if possible, we have two such decomposition, that is, there are $g_1,\  g_2, \ v_1$ and $v_2$ satisfying 
\begin{align*}
    g_1 + \d^k v_1 =  f =     g_2 + \d^k v_2, \quad \mbox{ and } \quad  \delta^{k}g_1=0 =   \delta^{k}g_2\\
 \Rightarrow   (g_1 -g_2) + \d^k (v_1 -v_2) =  0, \quad \mbox{ and } \quad  \delta^{k}(g_1-g_2)=0.
\end{align*}
%Since the decomposition equations $ f = g+\d^{k}v  $ and $ \delta^{k}g=0$ are linear. 
 \noindent Therefore to prove the uniqueness of the decomposition, it is enough to prove $ f=0 $ implies  $ g=v=0 $. Now $f=0$ gives $ g+\d^{k}v=0 $ and $ \delta^{k}g=0 $. Since $ g \in \mathcal{S}'(S^m) $ and $ v\in \mathcal{S}'(S^{m-k}) $, where $ \mathcal{S}' $ denotes the space of tempered distributions. Applying Fourier transform of the equations $ g+\d^{k}v=0 $ and $ \delta^{k}g=0 $, we get $ \widehat{g}(y)+ (\I)^{k} i_{y^{\otimes {k}}}\widehat{v}(y)=0 $ and $ j_{y^{\otimes (k)}} \widehat{g}(y)=0$. By Theorem \ref{th: decomposition of f in frequency variable} we have $ \widehat{g}(y)=\widehat{v}(y)=0 $ in $ \Rn_0= \Rn\setminus\{0\} $, $i.e., $ the support of distributions is contained in $ \{0\}.$  Thus  $ \widehat{g} $ and $ \widehat{v} $ can be written as finite linear combination of derivatives of Dirac delta distribution.
  Therefore 
 \[ \widehat{g} = \sum\limits_{|\alpha|\le p} c_{\alpha} \PD^{\alpha} \delta_{0}\] for some positive integer $ p $ and $ \delta_0 $ is the Dirac delta distribution. 
 
  Again $ \PD^{\alpha}\delta_0 \in \mathcal{S}'(\Rn)$ be the space of tempered distributions, for any multi-index $ \alpha $. Taking inverse Fourier transform of the above in the sense of tempered distributions, we obtain $ g $ is a polynomial of degree  almost $ p$. But $ g(x) \rightarrow 0$ as $ |x| \rightarrow \infty$  implies $ g = 0 $ in $ \Rn $.
One can argue similarly  and conclude that $ v(x) =0$ in $ \Rn $.     
\end{proof}

\begin{remark}
  We remark that, the estimates for the Fourier transform of $g$ and $v$ in \eqref{1.13} are optimal and can not be improved. 
\end{remark}

\section{Kernel description and Injectivity result for the operator $\Ic^k$}\label{sec: Injectivity results}
It is known \cite[Theorem 2.2.1]{Sharafutdinov1994} that the ray transform $\Ic^0/ I^0$  is injective over solenoidal tensor fields (sometimes also  called $I^0$ is s-injective) in $\Rb^n$. In a recent article \cite{Krishnan2018}, authors showed the injectivity of $\Ic^m$ over symmetric $m$-tensor fields in $\Rb^n$.  In this section, our aim is to generalize this injectivity result for $\Ic^k$ ($0 < k < m$). Additionally, we also provide an explicit description for the kernel of  $\Ic^k$ ($0 < k < m$).
\begin{theorem}[Injectivity of $\Ic^k$]\label{injectivity_result}
	Let $ f\in \mathcal{S}(S^m) $ be a $(k+1)$-solenoidal tensor field in $\Rn$, that is, $\delta^{k+1} f =0$. Then 
	$$\Ic^k f = 0 \qquad \Longrightarrow \qquad f \equiv 0.$$
		In other words, the operator $\Ic^k$ is injective over $(k+1)$-solenoidal tensor fields.
\end{theorem}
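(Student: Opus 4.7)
The plan is to pass to the Fourier side. Let $F := \widehat{f}$, a Schwartz symmetric $m$-tensor field on $\Rb^n$; it suffices to show $F \equiv 0$. The divergence hypothesis $\delta^{k+1} f = 0$ translates to
\begin{equation*}
j_{y^{\otimes(k+1)}} F(y) = 0 \qquad \text{for all } y \in \Rb^n,
\end{equation*}
and the identity $\widehat{I^q f}(y, \xi) = (2\pi)^{1/2} \I^q \langle \xi, \partial_y\rangle^q \langle F(y), \xi^{\otimes m}\rangle$ recasts $\Ic^k f = 0$ as
\begin{equation*}
\langle \xi, \partial_y\rangle^q \langle F(y), \xi^{\otimes m}\rangle = 0 \qquad \text{whenever } \langle y, \xi\rangle = 0,\ q = 0, 1, \dots, k.
\end{equation*}

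\textbf{Factorization of $\Phi$.} Set $\Phi(y, \xi) := \langle F(y), \xi^{\otimes m}\rangle$, a polynomial of degree $m$ in $\xi$ with Schwartz coefficients in $y$. I will establish by induction on $\ell \in \{0, 1, \dots, k\}$ the identity
\begin{equation*}
\Phi(y, \xi) = \langle y, \xi\rangle^{\ell+1}\Phi_{\ell+1}(y, \xi),
\end{equation*}
with $\Phi_{\ell+1}$ polynomial of degree $m - \ell - 1$ in $\xi$ and smooth in $y \in \Rb^n\setminus\{0\}$. The base case $\ell = 0$ follows because, for fixed $y \neq 0$, $\Phi(y, \cdot)$ is a polynomial in $\xi$ vanishing on the hyperplane $\{\langle y, \xi\rangle = 0\}$ and is therefore divisible by the linear form $\langle y, \xi\rangle$. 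For the inductive step, the elementary calculation $\langle \xi, \partial_y\rangle^j \langle y, \xi\rangle^\ell = \frac{\ell!}{(\ell-j)!}\,|\xi|^{2j}\langle y, \xi\rangle^{\ell-j}$ combined with the Leibniz rule gives
\begin{equation*}
\langle \xi, \partial_y\rangle^\ell \Phi(y, \xi)\Big|_{\langle y, \xi\rangle = 0} = \ell!\,|\xi|^{2\ell}\,\Phi_\ell(y, \xi)\Big|_{\langle y, \xi\rangle = 0},
\end{equation*}
since every Leibniz term with $j < \ell$ carries a positive power of $\langle y, \xi\rangle$ and so vanishes on $\{\langle y, \xi\rangle = 0\}$. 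The vanishing of the left side (by hypothesis, valid for $\ell \le k$) forces $\Phi_\ell$ itself to vanish on $\{\langle y, \xi\rangle = 0\}$, and polynomial divisibility then produces the further factor of $\langle y, \xi\rangle$. After $k+1$ iterations I obtain a symmetric $(m-k-1)$-tensor field $V(y)$, smooth on $\Rb^n \setminus \{0\}$, such that $F(y) = i_{y^{\otimes(k+1)}} V(y)$.

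\textbf{Conclusion.} Inserting this identity into the divergence condition gives $j_{y^{\otimes(k+1)}} i_{y^{\otimes(k+1)}} V(y) = 0$. A direct contraction check on tensor indices shows that $j_{y^{\otimes(k+1)}}$ is the adjoint of $i_{y^{\otimes(k+1)}}$ with respect to the Euclidean inner product on symmetric tensors; consequently
\begin{equation*}
\bigl|i_{y^{\otimes(k+1)}} V(y)\bigr|^2 = \bigl\langle j_{y^{\otimes(k+1)}} i_{y^{\otimes(k+1)}} V(y),\, V(y)\bigr\rangle = 0,
\end{equation*}
whence $F(y) = i_{y^{\otimes(k+1)}} V(y) = 0$ for every $y \neq 0$. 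Continuity of $F$ upgrades this to $F \equiv 0$, and Fourier inversion delivers $f \equiv 0$. The main obstacle is the factorization step: one must carry out polynomial division of $\Phi$ by $\langle y, \xi\rangle$ at each of the $k + 1$ levels while simultaneously tracking the smoothness of the quotient in $y$ away from the origin.
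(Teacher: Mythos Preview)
Your proof is correct, and it takes a genuinely different route from the paper's. The paper first invokes Lemma~\ref{inversion} to transform the data $I^0 f,\dots,I^k f$ into the ray transforms $J^0 f_{m-\ell}$ of the restricted tensors $f_{m-\ell}$; after Fourier transform this yields the relations $\langle \widehat f(y), y^\ell\otimes\xi^{m-\ell}\rangle=0$ for $\xi\perp y$ and $0\le\ell\le k$, which together with the relations $\langle \widehat f(y), y^{k+p}\otimes\xi^{m-k-p}\rangle=0$ coming from $\delta^{k+1}f=0$ are shown, by an explicit dimension count in $S^m(\Rb^n)$, to force $\widehat f(y)=0$ for $y\ne 0$. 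Your argument bypasses both Lemma~\ref{inversion} and the dimension count: you use the Fourier identity for $I^q f$ directly, peel off one factor of $\langle y,\xi\rangle$ from $\langle\widehat f(y),\xi^m\rangle$ at each step by polynomial division, and then finish cleanly via the adjoint pair $(i_{y^{\otimes(k+1)}},j_{y^{\otimes(k+1)}})$, which encodes in one line exactly what the paper's counting argument establishes. Your approach is more self-contained and algebraic; the paper's has the advantage of exhibiting explicitly the full system of linear constraints on $\widehat f(y)$ at each frequency, which is the viewpoint reused in the kernel description of Theorem~\ref{th:injectivity result}.
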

Given a symmetric $m$-tensor field, we define a  symmetric $(m -\ell)$- tensor field  $f_{m-\ell}$ obtained from $ f $ by fixing the first $\ell$ indices $ i_1,\dots,i_\ell$. This can be done by fixing any $\ell$ indices. Due to symmetry it is enough to fix the first $\ell$ indices that is,
	\begin{equation}\label{def: of restricted tensor field}
\left( f_{m-\ell}\right)_{j_1\dots j_{m-\ell}} = f_{i_1\dots i_\ell j_1\dots j_{m-\ell}}, \quad \mbox{ where } i_1,\dots, i_{\ell} \  \mbox{ are fixed.}
	\end{equation} 
Using this notation, the extended $q$-th integral moment ray transform of tensor field $f_{m-\ell}$ for any fixed choice of $i_1,\dots,i_\ell$ will be denoted by $J^q f_{m-\ell} (x, \xi)$, for any integer $ q \ge 0$.
\begin{lemma}\label{inversion}
	If $ I^0f,\dots,I^r\!f\,\  (0\le r\le m)$ are given for a symmetric $m$-tensor field $ f \in  \Sc(S^m)$. Then the following identity holds
	\begin{equation}\label{inversion_general}
	(J^0f_{m-r})_{i_1\dots i_r}= \frac{ (m-r)!}{m!}\sigma(i_1\dots i_r)\sum_{p=0}^{r} (-1)^p \binom{r}{p}\, \frac{\partial^r J^pf}{\partial x^{i_1}\dots\partial x^{i_p}\partial\xi^{i_{p+1}}\dots\partial\xi^{i_r}}
	%\frac{\PD^{\ell-s}J^{0}g_s}{\PD \xi^{i_{s+1}}\cdots \PD \xi^{i_{\ell}}}
	\end{equation}
 for  $1\le i_1,\dots,i_r\le n $.
\end{lemma}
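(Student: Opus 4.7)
The plan is to derive a one-step recurrence that reduces $J^p$ of a rank-$m'$ symmetric tensor field to $J^\bullet$ of its rank-$(m'-1)$ restriction obtained by fixing one index, and then iterate it $r$ times. For any $g \in \mathcal{S}(S^{m'})$, differentiating the definition \eqref{eq:definition of Jk} of $J^p g$ with respect to $\xi^j$ produces two contributions: the chain rule contributes an extra factor of $t$, raising the moment index and giving $\partial_{x^j} J^{p+1}g$, while differentiating the symbol $\xi^{j_1}\cdots\xi^{j_{m'}}$ collects, by the symmetry of $g$, $m'$ equal terms that combine to $m'\,J^p g_{m'-1}$ with first index fixed to $j$. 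Rearranging yields the base recurrence
\begin{equation*}
m'\,J^p g_{m'-1} \;=\; \frac{\partial J^p g}{\partial \xi^j} \;-\; \frac{\partial J^{p+1} g}{\partial x^j}.
\end{equation*}

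I would then iterate this identity $r$ times, successively applied to $g = f, f_{m-1}, \ldots, f_{m-r+1}$ and to the new fixed indices $i_1, i_2, \ldots, i_r$; the cumulative scalar prefactor is $m(m-1)\cdots(m-r+1) = m!/(m-r)!$. Writing $A_j = \partial/\partial \xi^j$, $B_j = \partial/\partial x^j$, and letting $S$ denote the formal shift $J^p f \mapsto J^{p+1}f$, the iteration takes the clean form
\begin{equation*}
\frac{m!}{(m-r)!}\,(J^0 f_{m-r})_{i_1\cdots i_r} \;=\; \prod_{k=1}^r (A_{i_k} - B_{i_k}\,S)\,J^0 f.
\end{equation*}
Since the $A_{i_k}$, $B_{i_k}$, and $S$ pairwise commute, a Leibniz-type expansion over subsets $I \subseteq \{1,\ldots,r\}$ yields $\sum_{I}(-1)^{|I|}\prod_{k\in I}B_{i_k}\prod_{k\notin I}A_{i_k}\,J^{|I|}f$. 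Grouping terms with $|I|=q$ and observing that the $\binom{r}{q}$ equal-size subsets are permuted transitively by the symmetric group acting on $(i_1,\ldots,i_r)$, their sum equals $\binom{r}{q}$ times the symmetrization $\sigma(i_1\cdots i_r)[B_{i_1}\cdots B_{i_q} A_{i_{q+1}}\cdots A_{i_r} J^q f]$, matching the right-hand side of \eqref{inversion_general}.

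To make the iteration fully rigorous I would carry out an induction on $r$ with the statement generalized to arbitrary $p$ (not just $p=0$), since reducing $J^p f_{m-\ell}$ calls on both $J^p$ and $J^{p+1}$ of the next-higher rank field. The base case $r=0$ is tautological and the inductive step is a direct application of the base recurrence. I expect the main obstacles to be: (i) the careful derivation of the base recurrence, where one must track the $m'$ factor produced by Euler-type differentiation of $\xi^{\otimes m'}$—this step genuinely uses the full symmetry of $g$; and (ii) the combinatorial bookkeeping to convert the subset-expansion into the symmetrized form with binomial coefficients. Neither is deep, but together they constitute the bulk of the calculation.
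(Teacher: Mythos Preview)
Your proposal is correct and rests on the same key recurrence the paper uses, namely
\[
m'\,J^p g_{m'-1}=\frac{\partial J^p g}{\partial \xi^{j}}-\frac{\partial J^{p+1} g}{\partial x^{j}},
\]
so at the level of ideas the two arguments coincide. The organizational difference is that the paper inducts on the tensor rank $m$ (reducing an $(m{+}1)$-tensor to its restriction $f_m$, applying the hypothesis for rank $m$, and then invoking a combinatorial identity quoted from \cite{Krishnan2018} to recombine the alternating sum), whereas you iterate the recurrence $r$ times via the operator calculus $\prod_k(A_{i_k}-B_{i_k}S)$ and extract the binomial coefficients directly by the subset expansion plus symmetrization. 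Your route is a bit more self-contained, since it produces the Pascal/Leibniz combinatorics on the spot rather than appealing to the earlier paper; the paper's route has the virtue of explicitly displaying the inductive structure (including the need, which you correctly flag, to keep the statement for general $p$ in the hypothesis). The only point worth tightening is the commutation claim for $S$: it is cleanest to view $S$ as acting on the full sequence $(J^p g)_{p\ge 0}$ componentwise by a shift, with $A_j,B_j$ acting termwise, so that commutativity is immediate; this is implicit in your write-up but should be said once.
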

\begin{proof}
This result has been already proved in \cite[Theorem 3.1]{Krishnan2018} for the case $r=m$ and we follow similar technique to prove the result for case $(0 \leq r < m)$ with the required modifications. 

The idea is to use induction on $m$. For $m=0$, the only choice for $r$ is $0$ and hence the relation \eqref{inversion_general} holds trivially. In fact, if $r=0$ then the relation \eqref{inversion_general} holds for any $m$. Assume the relation \eqref{inversion_general} is true for $m$ tensor fields with $ 0 \le r < m$. We want to use this induction hypothesis to verify \eqref{inversion_general} for any  $1\le r+1<m+1$.
	
 Differentiating $ J^p\!f $ with respect to $ \xi^{i_{r+1}} $ we get  
	\begin{align*}
	J^p\!f_m&=\frac{1}{m+1}\left(\frac{\PD J^pf}{\PD \xi^{i_{r+1}}}-\frac{\PD J^{p+1}f}{\PD x^{i_{r+1}}}\right)\\=& \int_{-\infty}^{\infty} t^p \left( f_{i_1\cdots i_{m}i_{r+1}}(x+t\xi)\, \xi_{i_1}\,\cdots \xi_{i_{m}}\right)\d t
	\end{align*}
	for $ 0\le p\le r $ and $ f_m= f_{m+1-1}$ is a symmetric $ m $ tensor field given by \eqref{def: of restricted tensor field} . Thus by induction hypothesis, we have
	\begin{align*}\label{Eq2.2}
	(J^0(f_m)_{m-r})_{i_1\cdots i_r}&= \frac{ (m-r)!}{m!}\sigma(i_1\dots i_r)\sum_{p=0}^{r} (-1)^p \binom{r}{p}\, \frac{\partial^r J^pf_m}{\partial x^{i_1}\dots\partial x^{i_p}\partial\xi^{i_{p+1}}\dots\partial\xi^{i_r}}\nonumber\\
	&= \frac{ (m-r)!}{(m+1)!}\sigma(i_1\dots i_r)\sum_{p=0}^{r} (-1)^p \binom{r}{p}\, \frac{\partial^r }{\partial x^{i_1}\dots\partial x^{i_p}\partial\xi^{i_{p+1}}\dots\partial\xi^{i_r}} \left(\frac{\PD J^pf}{\PD \xi^{i_{r+1}}}-\frac{\PD J^{p+1}f}{\PD x^{i_{r+1}}}\right).
	\end{align*}	
	Since $(J^0(f_m)_{m-r})_{i_1\cdots i_r}= (J^0f_{m-r})_{i_1\cdots i_{r+1}} $, which is symmetric with respect to indices $ i_1 \dots i_{r+1}$. Therefore, above equation reduces to 
	\begin{equation}\label{Eq2.3}
	(J^0f_{m-r})_{i_1\cdots i_{r+1}}= \frac{ (m-r)!}{(m+1)!}\sigma(i_1\dots i_{r+1})\left[\sum_{p=0}^{r} (-1)^p \binom{r}{p}\, \frac{\partial^r }{\partial x^{i_1}\dots\partial x^{i_p}\partial\xi^{i_{p+1}}\dots\partial\xi^{i_r}} \left(\frac{\PD J^pf}{\PD \xi^{i_{r+1}}}-\frac{\PD J^{p+1}f}{\PD x^{i_{r+1}}}\right)\right].
	\end{equation}
	Using, the arguments used in \cite[Theorem 3.1]{Krishnan2018}, the term inside the bracket can be expressed as 
	\begin{align*}
	\sum_{p=0}^{r} (-1)^p \binom{r}{p}\, \frac{\partial^r }{\partial x^{i_1}\dots\partial x^{i_p}\partial\xi^{i_{p+1}}\dots\partial\xi^{i_r}} \left(\frac{\PD J^pf}{\PD \xi^{i_{r+1}}}-\frac{\PD J^{p+1}f}{\PD x^{i_{r+1}}}\right)&= \sum_{p=0}^{r+1} (-1)^p \binom{r+1}{p}\, \frac{\partial^{r+1}J^pf }{\partial x^{i_1}\dots\partial x^{i_p}\partial\xi^{i_{p+1}}\dots\partial\xi^{i_{r+1}}}.
	\end{align*}
	With the help of this, \eqref{Eq2.3} implies  
	\begin{equation*}
	(J^0f_{m-r})_{i_1\cdots i_{r+1}}=\frac{ (m-r)!}{(m+1)!}\sigma(i_1\dots i_{r+1}) \sum_{p=0}^{r+1} (-1)^p \binom{r+1}{p}\, \frac{\partial^{r+1}J^pf }{\partial x^{i_1}\dots\partial x^{i_p}\partial\xi^{i_{p+1}}\dots\partial\xi^{i_{r+1}}}.
	\end{equation*}
	This completes the proof.	
\end{proof}

\begin{proof}[Proof of Theorem \ref{injectivity_result}]
	Let $f$ be a symmetric $m$-tensor field in $\Rb^n$ satisfying $\delta^{k+1} f =0$ and $\Ic^k f  =0 $, that is,  $ I^\ell f =0 $ for $\ell = 0, 1 , \dots, k$. Our aim is to show that these conditions imply $f \equiv 0$. 
	
%	For any fix $0 \leq \ell \leq k$, we define an $(m -\ell)$-symmetric tensor  $f_{m-\ell}$ by fixing first $\ell$ indices $ i_1,\cdots,i_\ell$ in the $m$-tensor field $f$. i.e. 
%	$$ f_{m-\ell} = f_{j_1\dots j_{m-\ell}} = f_{i_1\dots i_\ell j_1\dots j_{m-\ell}}, \quad \mbox{ where } i_1,\dots, i_{\ell} \  \mbox{ are fixed}.$$
%	In these notations, the  ray transform of scalar function $f_{m-\ell} = f_{j_1\dots j_{m-\ell}}$ for any fix choice of $j_1, \dots , j_{m-\ell}$ will be denoted by $J f_{m-\ell} (x, \xi)$. 

	Before moving further, recall  $ J^0f,\dots,J^k f $ are the extended operators satisfying $ J^{\ell}f|_{\tn} =I^{\ell}f$ for $ \ell=0,1,\dots,k$.  By Lemma \ref{inversion}, we have 
	\begin{equation*}
	J^0 f_{m-\ell}(x, \xi) =\frac{(m-\ell)!}{\ell!}\sigma(i_1\dots i_\ell)\sum\limits_{r=0}^{\ell}(-1)^r{\ell\choose r}\frac{\partial^{\ell}J^r f(x, \xi)}
	{\partial x^{i_1}\dots\partial x^{i_r}\partial\xi^{i_{r+1}}\dots\partial\xi^{i_\ell}}.
	\end{equation*}	
%	If we restrict $(x, \xi)$ to $\tn $ then the right hand side of the above equation will be $0$ because $ J^{\ell}f (x, \xi)= I^{\ell}f(x, \xi)= 0$ for $ \ell=0,1,\dots,k $.  That is
%	$$ Jf_{m-\ell}(x,\xi)=0.$$ 
From equation \eqref{eq:relation between Ik and Jk}, we know  $  I^{\ell}f(x, \xi)= 0 $  implies $  J^{\ell}f (x, \xi)=0 $ for each $ \ell=0,1,\dots,k $. Therefore, the above equation gives 
$$ J^0 f_{m-\ell}(x,\xi)=0.$$ 
Taking the the Fourier transform of the above equation over  $\tn$ yields, see \cite[Equation 2.1.15]{Sharafutdinov1994},
\begin{align*}
	\left\langle \widehat{f}_{m-\ell}(y) , \xi^{m-\ell}\right\rangle  =0 \quad \mbox{for}\quad y\perp\xi. \quad  
	\end{align*}
% \begin{align*}
% 	\widehat{f}_{m-\ell}(y) \otimes \xi^{m-\ell} =0 \quad \mbox{for}\quad y\perp\xi. \quad  
% 	\end{align*}		
Therefore for all $y \perp \xi$ we have
%By multiplying this equation with $ y^{\ell}=\underbrace{y\otimes\cdots\otimes y}_{\ell\, \mbox{times}}\,(y\ne 0) $, we have
	%	\[\hat{f}(y)\otimes y^{\ell}\otimes \xi^{m-\ell}=0  \quad \mbox{for}\quad  \ell=0,1, \cdots,k.\]
	\begin{align}\label{eq: Fourier transform of Jf(m-l)}
	\left\langle \widehat{f}(y), y^{\ell}\otimes \xi^{m-\ell}\right\rangle =0  \quad \mbox{for}\quad  \ell=0,1, \dots,k.
	\end{align}
	For a fixed  $y \in \Rb^n$, let $\zeta_1, \zeta_2,\dots,  \zeta_{n-1}$ be $(n-1)$ linearly independent vectors in the hyperplane $y^\perp$. Then, we can rewrite the above conditions as follows:
	\begin{align*}
\left\langle	\widehat{f}(y), y^{\ell}\otimes  \zeta_{i_1}^{\otimes j_1}\otimes \cdots \otimes \zeta_{i_{n-1}}^{\otimes j_{n-1}}\right\rangle = 0, \quad \mbox{where } 1 \leq i_1, \dots, i_{n-1} \leq (n-1)\ \mbox{ and } \sum_{p=1}^{n-1} j_p = m-\ell.
	\end{align*}	  
The collection $\left\{ y^{\ell}\otimes  \zeta_{i_1}^{\otimes j_1}\otimes \cdots \otimes \zeta_{i_{n-1}}^{\otimes j_{n-1}}\right\}$ is a linearly independent set, for details see \cite[Section 5]{Venky_and_Rohit}. Since $f$ is symmetric, the above relation provides $\begin{pmatrix}
	n+m-\ell-2\\
	m-\ell
	\end{pmatrix}$ independent conditions on $\widehat{f}(y)$ for every fixed $y \in \Rb^n$ and  $ 0 \leq \ell \leq k$. Therefore in total, we have 
	\begin{align*}
	\sum_{r =m-k}^{m}\begin{pmatrix}
	n+r-2\\
	r
	\end{pmatrix}
	\end{align*}
	independent conditions. But the dimension of a symmetric $m$-tensor in $\Rb^n$  is 
	$$\begin{pmatrix}
	n+m-1\\
	m
	\end{pmatrix} = \sum_{r =0}^{m}\begin{pmatrix}
	n+r-2\\
	r
	\end{pmatrix}. $$ Therefore, we require $\sum_{r =0}^{m-k-1}\begin{pmatrix}
	n+r-2\\
	r
	\end{pmatrix}$  more condition on $\widehat{f}(y)$ for the unique recovery of $\widehat{f}$ at $y \in \Rb^n$. To obtain these relations, we use the condition $\delta^{k+1}f=0$. By similar argument, we again take the Fourier transform of  $\delta^{k+1}f=0$ to get 
	\[ \left\langle\widehat{f}(y), y^{k+1}\right\rangle =0.\]
	This is a symmetric $(m-k-1)$-tensor field.  Taking the tensor product with $y^{p-1}\otimes\xi^{m-k-p} \,$  for  $ 1\le p \le m-k $,  entails
	\begin{align*}
	\left\langle\widehat{f}(y),y^{k+p}\otimes\xi^{m-k-p}\right\rangle =0.
	\end{align*}
	We can argue in exactly similar way as we did above to conclude that the above equality will provide total $\sum_{r =0}^{m-k-1}\begin{pmatrix}
	n+r-2\\
	r
	\end{pmatrix}$ independent conditions which are also independent of the conditions we get from \eqref{eq: Fourier transform of Jf(m-l)}.

	Thus by combining all these independent conditions, we get $ \hat{f}(y)=0 $ for all  $ y\ne 0 $ $ i.e., $ support of components of $ \widehat{f}\subseteq \{0\} $. Therefore components of $ \hat{f}(y) $ are a distribution which can written as a linear combination of derivatives of the Dirac delta distribution. But the condition $ f\in \mathcal{S}(S^m) $ implies $ f=0 $. 
\end{proof}

\begin{theorem}[Kernel of $\Ic^k$]\label{th:injectivity result}
A symmetric $m$-tensor field $ f \in \mathcal{S}(S^m)$ is in the kernel of the operator $\Ic^k$ for  $ 1 \leq k \leq \min\{m, n-1\}$ if and only if $ f =  \d^{k+1} v$, for some  $(m-k-1)$-tensor field $v$ satisfying  $\d^{\ell}v \rightarrow 0 $ as $ |x| \rightarrow \infty$.
\end{theorem}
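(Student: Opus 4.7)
The plan is to prove the two implications separately.

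For the \emph{if} direction, assume $f = \d^{k+1} v$ with $\d^\ell v \to 0$ at infinity for $0 \leq \ell \leq k$. I would start from the iterated identity
\[
\langle (\d^{k+1} v)(x+t\xi), \xi^{\otimes m} \rangle = \frac{d^{k+1}}{dt^{k+1}} \langle v(x+t\xi), \xi^{\otimes (m-k-1)} \rangle,
\]
which follows from $\langle \d u(x+t\xi),\xi^{\otimes(r+1)}\rangle = \frac{d}{dt}\langle u(x+t\xi),\xi^{\otimes r}\rangle$ by induction, and integrate by parts $(k+1)$ times in $I^q f(x,\xi) = \int_{-\infty}^{\infty} t^q h^{(k+1)}(t)\,dt$ with $h(t) = \langle v(x+t\xi),\xi^{\otimes(m-k-1)}\rangle$. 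For $0 \le q \le k$ the factor $\frac{d^{k+1}}{dt^{k+1}}(t^q)$ vanishes, and the intermediate boundary terms $[t^{q-j} h^{(k-j)}(t)]_{-\infty}^{\infty}$ are killed by the assumed decay of $\d^{k-j} v$. An equivalent and more robust route goes through the Fourier transform: since $\widehat{f}(y) = i^{k+1} i_{y^{\otimes(k+1)}} \widehat{v}(y)$ one has $\langle \widehat{f}(y),\xi^{\otimes m}\rangle = i^{k+1}\langle y,\xi\rangle^{k+1} \langle \widehat{v}(y),\xi^{\otimes(m-k-1)}\rangle$, and plugging this into $\widehat{I^q f}(y,\xi) = (2\pi)^{1/2} i^q \langle\xi,\partial_y\rangle^q \langle \widehat{f}(y),\xi^{\otimes m}\rangle$ one sees that for $q \le k$ at least one factor $\langle y,\xi\rangle$ survives every differentiation and is annihilated by the restriction $y\perp\xi$ on $\tn$.

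For the \emph{only if} direction, assume $\Ic^k f = 0$. I would apply Theorem \ref{th:decomp_in_original_sp} with $k+1$ in place of $k$ to obtain the unique decomposition
\[
f = g + \d^{k+1} v, \qquad \delta^{k+1} g = 0,
\]
with $g \in C^\infty(S^m)$ and $v \in C^\infty(S^{m-k-1})$ inheriting the decay bounds of \eqref{estimate_for_g_and_v}; in particular $\d^\ell v \to 0$ as $|x|\to \infty$ for every $0 \le \ell \le k$. The \emph{if} direction applied to $\d^{k+1} v$ gives $\Ic^k(\d^{k+1}v) = 0$, so by linearity $\Ic^k g = 0$. Since $g$ is $(k+1)$-solenoidal, Theorem \ref{injectivity_result} forces $g \equiv 0$, and therefore $f = \d^{k+1} v$ with the required decay on $v$.

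The main obstacle I anticipate is the boundary-term bookkeeping in the direct integration-by-parts approach for the \emph{if} direction: verifying $t^{q-j} h^{(k-j)}(t) \to 0$ as $t \to \pm\infty$ needs enough quantitative decay of $\d^{k-j} v$ to dominate the polynomial factor $t^{q-j}$, not merely the given pointwise vanishing. The Fourier-side argument outlined above bypasses this entirely and is, in my view, the cleaner route. A secondary subtlety is that Theorem \ref{th:decomp_in_original_sp} is formulated for index $\le \min\{n-1, m\}$, so the borderline case $k = n-1 \le m-1$ may require either a slight extension of that decomposition to order $n$, or a direct Fourier-based uniqueness argument mirroring the one in the proof of Theorem \ref{th:decomp_in_original_sp}.
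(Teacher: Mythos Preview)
Your strategy for the \emph{only if} direction---decompose $f=g+\d^{k+1}v$ with $\delta^{k+1}g=0$ and then kill $g$ via Theorem~\ref{injectivity_result}---is the natural modular idea, but it conceals a genuine regularity gap that the paper has to work around rather than ignore. The field $g$ produced by Theorem~\ref{th:decomp_in_original_sp} is \emph{not} in $\mathcal{S}(S^m)$: it is only smooth with the decay $|g(x)|\le C(1+|x|)^{1-n}$. Two things fail simultaneously. First, Theorem~\ref{injectivity_result} is stated (and proved, via Lemma~\ref{inversion}) for Schwartz tensors, so you cannot invoke it on $g$ as a black box. Second, and more basically, the individual moment integrals $I^q g$ and $I^q(\d^{k+1}v)$ need not converge: along a line $t^q(1+|t|)^{1-n}$ is not integrable once $q\ge n-2$, which is inside the range $q\le k\le n-1$ allowed by the theorem. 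So the line ``by linearity $\Ic^k g=0$'' is not a priori meaningful. The paper's introduction singles out precisely this non-closedness of the decomposition in $\mathcal{S}$ as the central obstacle.

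The paper's fix is to never form $\Ic^k g$ at all and to run everything on the Fourier side, which is exactly the instinct you already display in your alternative argument for the \emph{if} direction. From $\Ic^k f=0$ one obtains (via Lemma~\ref{inversion}) the algebraic relations $\langle\widehat{f}_{m-\ell}(y),\xi^{m-\ell}\rangle=0$ for $y\perp\xi$, $0\le\ell\le k$. The pointwise identity $\widehat{f}(y)=\widehat{g}(y)+i_{y^{\otimes(k+1)}}\widehat{v}(y)$ together with $\langle y,\xi\rangle=0$ then transfers these relations verbatim to $\widehat{g}$, and the linear-algebra counting argument from the \emph{proof} of Theorem~\ref{injectivity_result} (combined with $j_{y^{\otimes(k+1)}}\widehat{g}=0$) yields $\widehat{g}(y)=0$ for $y\ne0$; the decay $g(x)\to0$ finally excludes the polynomial remainder. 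In effect the paper re-runs the injectivity argument directly on $\widehat{g}$ rather than citing the theorem. Your observation about the borderline $k=n-1$ for the decomposition is a separate, valid wrinkle.
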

% \begin{theorem}\label{th:injectivity result}
% Let $ f \in \mathcal{S}(S^m)$ be a symmetric $m$-tensor field defined on $\Rb^n$ and  $ 1 \leq k \leq \min\{m, n-1\}$ be a fixed positive integer. Then $ I^\ell f =0$ for $ \ell = 0, 1, \dots ,k$ if and only if $ f =  \d^{k+1} v$ \Rohit{for some}  $(m-k-1)$-tensor field $v$ satisfying  $\d^{\ell}v \rightarrow 0 $ as $ |x| \rightarrow \infty$. \tred{Moreover, if $f$ satisfies $\delta^{k+1} f =0 $ then we can conclude $f$ is identically zero.}   
%\end{theorem}
\begin{proof}
    To proof the `if' part of the theorem, assume  $ f=\d^{k+1} v$ for some $ v \in C^{\infty}(S^{m-k-1})$ satisfying   $\d^{\ell}v \rightarrow 0 $ as $ |x| \rightarrow \infty$ for $ \ell=0,1,\dots,k $. Then a simple application of integration by parts entails 
    $$ I^{\ell}(f)= I^0(\d^{k+1-\ell}v)=0, \qquad \mbox{ for } 0\le \ell \le k.$$ Conversely, if $ f\in \mathcal{S}(S^{m}) $ satisfies $ I^{\ell}f =0,\, \ell=0,1,\dots,k$. According to our decomposition theorem  \ref{th:decomp_in_original_sp},  $ f $ can be written as 
	\[f=g+\d^{k+1}v,\quad \delta^{k+1} g=0 \quad \mbox{and}\quad  \d^{\ell}v\, \rightarrow 0 \quad \mbox{as} \quad |x| \rightarrow \infty, \ \  0\le \ell\le k.\]
 	\noindent Now from Lemma \ref{inversion} we have  \[ J^{0}f_{m-\ell}(x,\xi)=0 \quad \ell=0,1,\dots,k \]  where $ f_{m-\ell} $ is symmetric $ m-\ell $ tensor field obtained from $ f $ by fixing $ \ell $ indices. This   imply $ I^{0}f_{m-\ell}= J^{0}f_{m-\ell}|_{\tn}=0.$   Fourier transform of $ I^{0}f_{m-\ell} $ gives   \[  \widehat{I^{0}f}(y,\xi)=\left\langle\widehat{f}_{m-\ell}(y), \xi^{m-\ell} \right\rangle=0 \] for $ y\perp \xi $ and $ 0\le \ell\le k $. 
	This, $ y\perp\xi $ and  together with the fact $ \widehat{f}(y) = \widehat{g}(y)+ y^{k+1}\otimes \widehat{v}(y)$ gives 
 	\[\left\langle \widehat{g}_{m-\ell}(y),\xi^{m-\ell} \right\rangle=0. \]
 	By multiplying this equation with $ y^{\ell}=\underbrace{y\otimes\cdots\otimes y}_{\ell\, \mbox{times}}\,(y\ne 0) $ and then summing over $ \ell $ indices , we obtain
		\[\left\langle\widehat{g}(y), y^{\ell}\otimes \xi^{m-\ell}\right\rangle=0\quad \mbox{for}\quad 0\le \ell \le k. \]
		Applying Fourier transform on the equation $\delta^{k+1}g=0$, we get
 	 \[\left\langle\widehat{g}(y)\otimes y^{k+1}\right\rangle=0.\]
 	 This is a symmetric $(m-k-1)$-tensor field and taking the tensor product with $y^{r-1}\otimes\xi^{m-k-r} \,$  for  $ 1\le r \le m-k $  entails
 	 \begin{align*}
	\left\langle \widehat{g}(y), y^{k+r}\otimes\xi^{m-k-r}\right\rangle =0.
 	 \end{align*}
 	Thus for a  non-zero vector $ y\in \Rn $ with $ y\perp\xi $ , we have
\begin{align}\label{Eq3.11}
 		\left\langle \widehat{g}(y), y^{r}\otimes\xi^{m-r} \right\rangle=0
 	\end{align}
 	for $ 0\le r\le m $.
 	Therefore we are in the same situation as in the Theorem \ref{injectivity_result} and  have enough linearly independent relations, which implies $ \widehat{g}(y)=0 $ for $ y\ne 0 $.
 	 Since $ \widehat{g}(y) $ is an integrable function. So we can view this as a distribution and support of $ (\widehat{g})\subseteq \{0\} $. Amending the arguments used in the proof of uniqueness part of the Theorem \ref{th:decomp_in_original_sp}, we get $ g=0 $ in $ \Rn $.  %Therefore 
 	 %\[ \widehat{g} = \sum\limits_{|\alpha|\le p} c_{\alpha} \PD^{\alpha} \delta_{0},\] for some $ p\in \mathbb{N} $ and $ \delta_0 $ is a Dirac distribution. 
 	 
 	 %Again $ \PD^{\alpha}\delta_0 \in \mathcal{S}'(\Rn)$ be the space of tempered distributions, for any multi-index $ \alpha $. Taking inverse Fourier transform of the above in the sense of tempered distribution we obtain $ g $ is a polynomial of degree  almost $ p$. But $ g(x) \rightarrow 0$ as $ |x| \rightarrow \infty$,  implies $ g \equiv 0 $ in $ \Rn $.
 	 Putting $g=0$ in the decomposition above, we achieve $f =\d^{k+1}v$ which completes the proof of converse part as well.
\end{proof}
\section{Range characterization}\label{sec:range characterization}
\noindent This section is devoted to a detailed description of the range for the operator $\Ic^k$. More specifically, we prove
	\begin{theorem}\label{th:range characterisation for I-k}
		Let $ n\ge 3$ and $1 \leq k \leq m$. An element $ (\vf^0,\vf^1,\dots,\vf^k)\in (\Sc(\tn))^{k+1}$ belongs to the range of operator $\Ic^k$ if and only if the following two conditions are satisfied:
		\begin{enumerate}
		    \item 	$\vf^\ell(x,-\xi)= (-1)^{m-\ell}\vf^\ell(x,\xi)  $ for $ \ell=0,1,\dots,k$.
		    \item  For $0\le \ell\le k$,	the functions $ \psi^{\ell} \in C^{\infty}(\Rn \times \Rn \setminus\{0\})$, defined by
		    \begin{equation}\label{def:of psi l}
		      \psi^{\ell}= |\xi|^{m-2\ell-1}\sum\limits_{r=0}^\ell(-1)^{\ell-r}\binom{\ell}{r}|\xi|^r \langle\xi,x\rangle^{\ell-r}\,(I^r\!f)
	\left(x-\frac{\langle x, \xi \rangle}{|\xi|^2}\xi,\frac{\xi}{|\xi|}\right)
		    \end{equation}
	satisfies the equations
		\begin{equation}\label{Johns's condition for psi k}
			\Big(\frac{\partial^2}{\partial x^{i_1}\partial\xi^{j_1}}-\frac{\partial^2}{\partial x^{j_1}\partial\xi^{i_1}}\Big)\dots
			\Big(\frac{\partial^2}{\partial x^{i_{m+1}}\partial\xi^{j_{m+1}}}-\frac{\partial^2}{\partial x^{j_{m+1}}\partial\xi^{i_{m+1}}}\Big) 	\psi^k=0
	\end{equation}
		for all indices $1\leq i_1,j_1,\dots,i_{m+1},j_{m+1}\leq n$.
	\end{enumerate}
	\end{theorem}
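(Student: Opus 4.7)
The plan is to verify necessity directly and then, for sufficiency, to construct the desired tensor field in Fourier space as its $(k+1)$-solenoidal representative, using the John condition on $\psi^k$ to guarantee smoothness across the origin.

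For necessity, suppose $(\vf^0,\dots,\vf^k) = \Ic^k f$ for some $f \in \Sc(S^m)$. The parity identity $\vf^\ell(x,-\xi) = (-1)^{m-\ell}\vf^\ell(x,\xi)$ follows immediately from the change of variables $t \mapsto -t$ in \eqref{eq:definition of momentum ray transform}. For the John condition, \eqref{eq:relation between Ik and Jk} gives $\psi^\ell = J^\ell f$ on $\Rb^n \times (\Rb^n\setminus\{0\})$, so it suffices to show that any product of $(m+1)$ John operators annihilates $J^k f$. This is verified iteratively: each application of $J_{ij}$ to $J^\ell f$ yields an integral of the same type but with one additional index of $f$ saturated; after $m+1$ applications all $m$ slots of $f$ are exhausted and the remaining integrand becomes a total $t$-derivative, which integrates to zero. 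This mechanism is the one used in the necessity proofs of \cite{Sharafutdinov1994,Krishnan2019a}.

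For sufficiency, assume $(\vf^0,\dots,\vf^k)$ satisfies (1) and (2). By Theorem \ref{injectivity_result} it is enough to construct a $(k+1)$-solenoidal $g \in \Sc(S^m)$ with $\Ic^k g = (\vf^0,\dots,\vf^k)$. Form $\psi^0,\dots,\psi^k$ from the data via \eqref{def:of psi l}. Integration by parts in $t$ gives $\l \xi, \PD_x \r J^{\ell+1} f = -(\ell+1)J^\ell f$, so upon iteration one obtains $\psi^\ell = \tfrac{(-1)^{k-\ell}\ell!}{k!}\l\xi,\PD_x\r^{k-\ell}\psi^k$ for $0 \le \ell \le k$. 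A one-line commutator check shows $[J_{ij},\l\xi,\PD_x\r]=0$, so condition (2) on $\psi^k$ automatically propagates to the $(m+1)$-fold John condition on every $\psi^\ell$. Invoking Lemma \ref{inversion} in the reverse direction, we then define candidate values for $J^0 g_{m-\ell}$ by
\begin{equation*}
\Phi^\ell_{i_1\dots i_\ell}(x,\xi) = \frac{(m-\ell)!}{m!}\sigma(i_1\dots i_\ell)\sum_{p=0}^\ell (-1)^p\binom{\ell}{p}\frac{\PD^\ell \psi^p}{\PD x^{i_1}\cdots \PD x^{i_p}\,\PD \xi^{i_{p+1}}\cdots \PD \xi^{i_\ell}}.
\end{equation*}
Taking the Fourier transform in $x$ on the hyperplane $\xi \perp y$ prescribes the scalar quantity $\l \widehat g(y), y^\ell \otimes \xi^{m-\ell}\r$ for every $\xi \perp y$ and $0 \le \ell \le k$. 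Together with the $(k+1)$-solenoidal constraint $j_{y^{\otimes(k+1)}}\widehat g(y)=0$, these relations determine $\widehat g(y)$ uniquely for each $y \neq 0$ by exactly the dimension/polarization count carried out in the proof of Theorem \ref{injectivity_result}.

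The principal obstacle is to show that the function $\widehat g$ so defined on $\Rb^n \setminus \{0\}$ extends to a Schwartz function whose inverse Fourier transform $g$ lies in $\Sc(S^m)$. This is precisely where the $(m+1)$-fold John condition on $\psi^k$ is used: it encodes the algebraic compatibilities of the low-order Taylor coefficients of $\psi^k$ near $\xi = 0$ that are needed for $\widehat g$ to extend smoothly across $y=0$. We plan to adapt the Taylor-matching analysis of \cite{Krishnan2019a} (which handles the extreme case $k=m$) to our intermediate range $0 < k < m$; here the solenoidal constraint $j_{y^{\otimes(k+1)}}\widehat g = 0$ plays the role of the missing higher-moment data, and the John condition should provide precisely the additional compatibility needed to match Taylor jets across the origin. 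Once smoothness of $\widehat g$ at the origin is established, Schwartz decay follows routinely from that of the $\vf^\ell$, and inverting the Fourier transform produces the desired $g \in \Sc(S^m)$ with $\delta^{k+1} g = 0$ and $\Ic^k g = (\vf^0,\dots,\vf^k)$.
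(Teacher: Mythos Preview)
Your necessity argument is fine and matches the paper's. The sufficiency strategy, however, has a genuine gap that is not merely a matter of filling in details.

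You propose to construct a $(k+1)$-solenoidal $g \in \Sc(S^m)$ with $\Ic^k g = (\vf^0,\dots,\vf^k)$, building $\widehat g$ on $\Rb^n\setminus\{0\}$ from the Fourier-side moment relations together with the constraint $j_{y^{\otimes(k+1)}}\widehat g(y)=0$, and then arguing that the John condition forces $\widehat g$ to extend smoothly across $y=0$. But such a Schwartz $(k+1)$-solenoidal preimage need not exist at all. If the data come from some $f\in\Sc(S^m)$, then by uniqueness your $\widehat g$ must coincide with the Fourier transform of the $(k+1)$-solenoidal part of $f$ produced by Theorem~\ref{th:decomp_in_original_sp}; and the proof of that theorem shows that this $\widehat g$ is generically singular at the origin, with the estimate $|D^\alpha\widehat g(y)|\lesssim |y|^{-|\alpha|}$ being sharp (see the remark following the theorem). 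Correspondingly $g$ enjoys only the decay $|g(x)|\le C(1+|x|)^{1-n}$, not rapid decay. The John condition on $\psi^k$ constrains the data $(\vf^\ell)$, not the solenoidal projection, and cannot repair this singularity; the Taylor-matching argument of \cite{Krishnan2019a} you invoke works in the case $k=m$ precisely because there one reconstructs $f$ itself, with no solenoidal constraint imposed.

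The paper avoids this obstruction by abandoning the search for a solenoidal preimage. It constructs instead $f=\sum_{s=0}^{k}\d^s g_s$ with each $g_s\in\Sc(S^{m-s})$, obtained inductively: at stage $\ell$ one forms the auxiliary function $\chi^\ell$ from $\psi^\ell$ and the previously built $g_0,\dots,g_{\ell-1}$, then uses a chain of lemmas (your $\Phi^\ell$ is exactly the paper's $\Psi_{i_1\dots i_\ell}$) to verify that $\chi^\ell$ satisfies the hypotheses of Sharafutdinov's range theorem for $I^0$ on $(m-\ell)$-tensors (Theorem~\ref{Th3.1}), which then produces $g_\ell\in\Sc(S^{m-\ell})$. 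The resulting $f$ is Schwartz but not solenoidal, and the delicate smoothness issues are absorbed into the black-box invocation of Theorem~\ref{Th3.1}.
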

\noindent The range of the operator $ \Ic^m$ was already proved in \cite{Krishnan2019a}. Therefore we consider the $ k<m $ case here. 
	
The following theorem from \cite[Theorem 2.10.1]{Sharafutdinov1994} provides the range characterization for the operator $I^0$ and we will use this repeatedly to prove our range characterization theorem for $\Ic^k$.
\begin{theorem} \label{Th3.1}
		Let $ n\ge 3 $. A function $\varphi\in{\mathcal S}(T\Sb^{n-1})$ belongs to the range of $I^0$ if and only if $\varphi$ satisfies the following two conditions:
		\begin{enumerate}
			\item[(1)] $\varphi(x,-\xi)=(-1)^m\varphi(x,\xi)$;
			\item[(2)] the function $\psi\in C^\infty\big({\mathbb R}^n\times({\mathbb R}^n\setminus\{0\})\big)$, defined by \begin{align*}
			    \psi(x,\xi)=|\xi|^{m-1}\varphi\Big(x-\frac{\langle\xi,x\rangle}{|\xi|^2}\xi,\frac{\xi}{|\xi|}\Big),
			\end{align*} satisfies the
			equations
			\begin{equation}
				\Big(\frac{\partial^2}{\partial x^{i_1}\partial\xi^{j_1}}-\frac{\partial^2}{\partial x^{j_1}\partial\xi^{i_1}}\Big)\dots
				\Big(\frac{\partial^2}{\partial x^{i_{m+1}}\partial\xi^{j_{m+1}}}-\frac{\partial^2}{\partial x^{j_{m+1}}\partial\xi^{i_{m+1}}}\Big) \psi=0
			\label{Eq1.9}
			\end{equation}
			for all indices $1\leq i_1,j_1,\dots,i_{m+1},j_{m+1}\leq n$.
		\end{enumerate}
	\end{theorem}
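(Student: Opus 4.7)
Assume $(\varphi^0,\ldots,\varphi^k) = \Ic^k f$ for some $f\in\Sc(S^m)$. The parity condition follows by substituting $t\mapsto -t$ in \eqref{eq:definition of momentum ray transform}, which yields $I^\ell f(x,-\xi) = (-1)^{m+\ell}I^\ell f(x,\xi) = (-1)^{m-\ell}\varphi^\ell(x,\xi)$. By \eqref{eq:relation between Ik and Jk} the function $\psi^\ell$ in \eqref{def:of psi l} coincides with $J^\ell f$. To verify the $(m+1)$-fold John's equation on $\psi^k = J^k f$, I would differentiate under the integral: the terms in which $\partial_{\xi^j}$ extracts an extra factor $t$ out of $f(x+t\xi)$ are symmetric in $(i,j)$ and cancel with the mirrored mixed derivative, leaving an integrand whose $\xi$-polynomial degree has dropped by one with each application of $J_{ij}$. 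After $m+1$ applications the integrand has $\xi$-polynomial degree $-1$, and is therefore identically zero.

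\textbf{Sufficiency: strategy.} For each choice of indices $i_1,\dots,i_k\in\{1,\dots,n\}$, I would define the candidate function on $\tn$
\[
\chi^{i_1\ldots i_k}(x,\xi) := \frac{(m-k)!}{m!}\,\sigma(i_1\ldots i_k)\sum_{p=0}^k (-1)^p\binom{k}{p}\frac{\partial^k\psi^p(x,\xi)}{\partial x^{i_1}\cdots\partial x^{i_p}\partial\xi^{i_{p+1}}\cdots\partial\xi^{i_k}}\bigg|_{\tn}.
\]
By Lemma \ref{inversion}, if the data came from a genuine $f$, then $\chi^{i_1\ldots i_k}$ would be exactly $I^0\!\bigl(f_{m-k}^{(i_1\ldots i_k)}\bigr)$, the standard ray transform of the reduced rank-$(m-k)$ tensor. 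The plan is to apply Theorem \ref{Th3.1} with rank parameter $m-k$ to each $\chi^{i_1\ldots i_k}$, recover rank-$(m-k)$ tensor fields $f_{m-k}^{(i_1\ldots i_k)}\in\Sc(S^{m-k})$, and assemble them into a single $f\in\Sc(S^m)$. Consistency across the choices of $(i_1,\ldots,i_k)$ should follow from the symmetrization $\sigma$ built into the definition of $\chi^{i_1\ldots i_k}$.

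\textbf{Verification of Theorem \ref{Th3.1}'s hypotheses.} Two things must be checked on $\chi^{i_1\ldots i_k}$: the parity $(-1)^{m-k}$, and the $(m-k+1)$-fold John's equation on its homogeneous extension to $\Rb^n\times(\Rb^n\setminus\{0\})$. The parity follows summand by summand, since $\psi^p$ has parity $(-1)^{m-p}$ by condition (1) and the $(k-p)$ derivatives in $\xi$ contribute $(-1)^{k-p}$, combining to the uniform factor $(-1)^{m-k}$. The homogeneous extension coincides with the unrestricted version of the display above because \eqref{def:of psi l} already carries the correct powers of $|\xi|$ and $\langle\xi,x\rangle$. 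Establishing the $(m-k+1)$-fold John's equation is the heart of the argument: condition (2) provides only the weaker $(m+1)$-fold bound on $\psi^k$, and naively commuting $J_{ij}$ past the derivatives in $\chi^{i_1\ldots i_k}$ cannot improve the order on its own.

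\textbf{Main obstacle and completion.} To bridge the gap between $(m+1)$-fold and $(m-k+1)$-fold John's, I would exploit the algebraic identity (valid for genuine moment transforms)
\[
\frac{\partial(J^{p-1}f)}{\partial\xi^{j}}-\frac{\partial(J^p f)}{\partial x^{j}}\,=\,m\,J^{p-1}(f|_{j}),
\]
where $f|_{j}$ is the rank-$(m-1)$ tensor obtained by freezing one index of $f$ at $j$, and lift it to the abstractly defined $\psi^\ell$'s using only the symbolic form \eqref{def:of psi l} together with condition (2). Each application of the lifted identity inside the alternating binomial sum defining $\chi^{i_1\ldots i_k}$ absorbs one $J_{ij}$, and iterating $k$ times converts the given $(J_{ij})^{m+1}\psi^k=0$ into the desired $(J_{ij})^{m-k+1}\chi^{i_1\ldots i_k}=0$. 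With both hypotheses of Theorem \ref{Th3.1} verified, one obtains the fields $f_{m-k}^{(i_1\ldots i_k)}$ and assembles $f\in\Sc(S^m)$. Finally, the equalities $I^\ell f = \varphi^\ell$ for $0\le\ell\le k$ are verified by running Lemma \ref{inversion} forward on $f$ and invoking the injectivity of $I^0$ over solenoidal rank-$(m-k)$ tensors that is implicit in Theorem \ref{Th3.1}.
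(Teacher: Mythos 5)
Your proposal does not prove the stated theorem. Theorem \ref{Th3.1} concerns a single function $\varphi\in\Sc(T\Sb^{n-1})$ and the range of the plain ray transform $I^0$; in the paper it is not proved at all but quoted from Sharafutdinov \cite[Theorem 2.10.1]{Sharafutdinov1994} precisely so that it can be used as a black box later. What you have written is instead a sketch of the paper's main range theorem, Theorem \ref{th:range characterisation for I-k}, for the tuple $(\varphi^0,\dots,\varphi^k)$ and the operator $\Ic^k$: your necessity step derives the parity $(-1)^{m-\ell}$ of each $\varphi^\ell$, and your sufficiency step explicitly says ``apply Theorem \ref{Th3.1} with rank parameter $m-k$''. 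Offered as a proof of Theorem \ref{Th3.1} itself this is circular; offered as a proof of Theorem \ref{th:range characterisation for I-k} it is at least aimed at the right target, but then it must be measured against Section \ref{sec:range characterization}, not against this statement.

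Even read as an attempt at Theorem \ref{th:range characterisation for I-k}, there are two concrete gaps relative to the paper's argument. First, you form only the top-level functions $\chi^{i_1\dots i_k}$ and try to recover all the reduced rank-$(m-k)$ tensors at once; but Theorem \ref{Th3.1} determines each such tensor only modulo the kernel of $I^0$ (potential fields), so the ``consistency across the choices of $(i_1,\dots,i_k)$'' and the assembly into a single $f\in\Sc(S^m)$ do not follow from the symmetrization alone. The paper avoids this by an induction on $\ell=0,1,\dots,k$: at stage $\ell$ it defines $\chi^\ell$ by subtracting from $\psi^\ell$ the contributions of the already constructed $g_0,\dots,g_{\ell-1}$ (equation \eqref{eq:definition_of_chi_l}), verifies the hypotheses of Theorem \ref{Th3.1} for rank $m-\ell$ via Lemmas \ref{Lm5.1}--\ref{restriction on Schwartz space}, obtains $g_\ell$ with $J^0g_\ell=\chi^\ell$, and finally sets $f=\sum_s \d^s g_s$. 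Second, your ``lifted identity'' that is supposed to convert the $(m+1)$-fold John condition on $\psi^k$ into an $(m-k+1)$-fold condition is left entirely schematic; this is exactly the hard step, and in the paper it is carried out by Lemma \ref{John's condition } (an induction on $m$ adapted from \cite{Krishnan2019a}) together with Lemma \ref{Lm 8.2} and Lemma \ref{general johns condition}. Without a precise statement and proof of that descent, the sufficiency argument is incomplete.
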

% \noindent To simplify notation, define John's operator $$J_{ij} = \left(\frac{\partial^2}{\partial x^{i}\partial\xi^{j}}-\frac{\partial^2}{\partial x^{j}\partial\xi^{i}}\right).$$
\noindent With the help of this John's operator, we rewrite the relation \eqref{Johns's condition for psi k} as follows:
$$	J_{i_{m+1}j_{m+1}}\dots J_{i_2j_2} J_{i_1j_1} \psi^k=0 \quad \mbox{for all indices} \quad 1\le i_1,j_1,\dots,i_{m+1},j_{m+1}\le n.$$
\subsection{Required lemmas and results for Proof of Theorem \ref{th:range characterisation for I-k}}\label{subsec:lemmas for range characterization}
We need a good amount of preparation before we get in to the proof of Theorem \ref{th:range characterisation for I-k}.  We start by making a quick observation that if a symmetric $m$-tensor field $ f\in \mathcal{S}(S^m) $  is given by   
		\begin{equation}\label{def:of f}
		f= \sum_{s=0}^{k}\d^{s} g_{s}, \quad \mbox{ where } g_s \in \Sc(S^{m-s}), \mbox{ for } s = 0, 1, \dots , k.
		\end{equation} 
Then using the identity  $I^{\ell}(\d f) =-\ell\, I^{\ell-1}\!f$ recursively, we get 
		\begin{align}
		I^{\ell} (\d^sg_s)&= \begin{cases} (-1)^s\,
		\binom{\ell}{s}\, s! \,I^{\ell-s} g_s& \mbox{if}\quad s\le \ell\\
		0& \mbox{if} \quad s> \ell.
		\end{cases}\nonumber \\
		\Longrightarrow \qquad  \vf^\ell =I^\ell\!f &= \sum_{s=0}^{\ell} (-1)^s\,
		\binom{\ell}{s}\ s! \,I^{\ell-s} g_s \label{eq:relation between phi and gs}\\ \mbox{ and } \qquad     \psi^\ell =J^\ell\!f &= \sum_{s=0}^{\ell} (-1)^s\,
		\binom{\ell}{s}\ s! \,J^{\ell-s} g_s,  \qquad \mbox{ for }  0\le \ell\le k \label{eq:relation between psi and gs}.
		\end{align}
%		\begin{equation}\label{Eq5.2}
%		\begin{aligned}
%		\vf^\ell=I^\ell\!f &= \sum_{s=0}^{\ell} (-1)^s\,
%		\binom{\ell}{s}\, s! \,I^{\ell-s} g_s,  \qquad \mbox{ for }  0\le \ell\le k.
%			\end{aligned}
%		\end{equation}
Note, if we can find tensor fields $g_s$, for $0 \leq s \leq k$, satisfying the above relation \eqref{eq:relation between phi and gs} then $(\vf^0, \vf^1, \dots, \vf^k)$ will be in the range of operator $\Ic^k$ and $\Ic^k f = (\vf^0, \vf^1, \dots, \vf^k)$, where $f$ is given by equation \eqref{def:of f}. Keeping this key conclusion in mind, we present a series of lemmas essential to proceed further.
%
%Let us introduce function $ \chi^{\ell} \in C^{\infty}(\Rn \times\Rn\setminus{0})$ (for each fixed $0\leq  \ell \le k$) defined by
%	  \begin{equation}\label{definition_of_chi_l}
%	  \chi^{\ell}=\frac{(-1)^\ell}{\ell!} \left(\psi^\ell - \sum_{s=0}^{\ell-1}(-1)^{s}\,\binom{\ell}{s}\,s! \,J^{\ell-s} g_{s}\right)
%	  \end{equation}
%where $g_s$, for $0 \leq s \leq k$, are $(m-s)$-tensor fields. 
			\begin{lemma}\label{Lm5.1}
If $\psi^s$,\, for\, $ 0\le s\le \ell
	-1 $ is given by relation \eqref{eq:relation between psi and gs} for known tensor fields $g_s$ (for $0 \leq s \leq \ell-1$), then the function $ \chi^{\ell} \in C^{\infty}(\Rn \times\Rn\setminus{0})$ (for each fixed $0\leq  \ell \le k$) defined by
	  \begin{equation}\label{eq:definition_of_chi_l}
	  \chi^{\ell}=\frac{(-1)^\ell}{\ell!} \left(\psi^\ell - \sum_{s=0}^{\ell-1}(-1)^{s}\,\binom{\ell}{s}\,s! \,J^{\ell-s} g_{s}\right)
	  \end{equation}
	   satisfy the following properties:
	   \begin{enumerate}
	   \item For $(x, \xi) \in C^{\infty}(\Rn \times\Rn\setminus{0})$ and $t \in \Rb$, 
\begin{equation}\label{translation of chi l}
	\chi^\ell (x+t\xi,\xi)= \chi^{\ell}(x,\xi).
\end{equation}
	\item For $(x, \xi) \in C^{\infty}(\Rn \times\Rn\setminus{0})$ and $0 \neq t \in \Rb$,
\begin{equation}\label{homogenety w r to xi}
\chi^\ell (x,t\xi)=\frac{t^{m-\ell}}{|t|} \chi^{\ell}(x,\xi).
\end{equation}
		   \end{enumerate}
		   	\end{lemma}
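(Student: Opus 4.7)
The guiding observation is that $\chi^\ell$ is set up so that, in the intended decomposition $f=\sum_{s=0}^{k}\d^s g_s$, the relation \eqref{eq:relation between psi and gs} extended to index $\ell$ would force $\chi^\ell = J^0 g_\ell$. Since $J^0$ applied to a symmetric tensor of rank $m-\ell$ is automatically translation invariant along $\xi$ and has the homogeneity $t^{m-\ell}/|t|$ in $\xi$, the plan is to verify these two properties for $\chi^\ell$ directly from the definition \eqref{eq:definition_of_chi_l}, without yet knowing that $g_\ell$ exists.

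The technical core is the pair of identities obeyed by every operator $J^q$ acting on a symmetric tensor $h$ of rank $p$:
\begin{equation*}
J^q h(x+t\xi,\xi)=\sum_{r=0}^{q}\binom{q}{r}(-t)^{q-r}\,J^r h(x,\xi),\qquad J^q h(x,t\xi)=\frac{t^{p-q}}{|t|}\,J^q h(x,\xi).
\end{equation*}
The first comes from the substitution $\sigma=t+\tau$ in \eqref{eq:definition of Jk} and a binomial expansion of $(\sigma-t)^q$; the second from $u=t\tau$, taking care of the sign of the Jacobian $1/|t|$. I would then check that $\psi^\ell$, defined by \eqref{def:of psi l}, satisfies the same two identities straight from the formula: the translation identity uses the projection invariance $(x+t\xi)-\langle\xi,x+t\xi\rangle|\xi|^{-2}\xi = x-\langle\xi,x\rangle|\xi|^{-2}\xi$ together with the binomial expansion of $(\langle\xi,x\rangle+t|\xi|^{2})^{\ell-r}$, while the homogeneity identity uses the parity hypothesis $\vf^r(x,-\xi)=(-1)^{m-r}\vf^r(x,\xi)$ to absorb the sign coming out of $\xi/|\xi|$ when $t<0$.

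Given these identities, property \eqref{homogenety w r to xi} is immediate: each term on the right of \eqref{eq:definition_of_chi_l} scales by the same factor $t^{m-\ell}/|t|$ under $\xi\mapsto t\xi$, since $g_s$ has rank $m-s$ and $J^{\ell-s}$ contributes $t^{(m-s)-(\ell-s)}/|t|=t^{m-\ell}/|t|$, matching $\psi^\ell$. For \eqref{translation of chi l}, substitute both translation identities into $\chi^\ell(x+t\xi,\xi)$; the terms with the highest index ($r=\ell$ in $\psi^\ell$ and $r=\ell-s$ in $J^{\ell-s}g_s$) reproduce $\chi^\ell(x,\xi)$, and the leftover is $\frac{(-1)^\ell}{\ell!}$ times
\begin{equation*}
\sum_{r=0}^{\ell-1}\binom{\ell}{r}(-t)^{\ell-r}\psi^r(x,\xi)\;-\;\sum_{s=0}^{\ell-1}(-1)^s\binom{\ell}{s}s!\sum_{r=0}^{\ell-s-1}\binom{\ell-s}{r}(-t)^{\ell-s-r}\,J^r g_s(x,\xi).
\end{equation*}
Now apply the inductive hypothesis \eqref{eq:relation between psi and gs} to expand each $\psi^r$ ($r<\ell$) in the first sum, swap the order of summation by setting $r=s+r'$, and use the Vandermonde-type identity $\binom{\ell}{s+r'}\binom{s+r'}{s}=\binom{\ell}{s}\binom{\ell-s}{r'}$. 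The two double sums then match term-by-term and cancel, giving \eqref{translation of chi l}. The only genuine obstacle is this last combinatorial bookkeeping; the transformation identities themselves and the homogeneity check are routine.
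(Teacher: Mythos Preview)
Your argument is correct and follows essentially the same route as the paper: the paper also invokes the translation law $\psi^{\ell}(x+t\xi,\xi)=\sum_{p}\binom{\ell}{p}(-t)^{\ell-p}\psi^{p}$ (citing \cite{Krishnan2019a}) together with the analogous identity for $J^{\ell-s}g_s$, substitutes the inductive hypothesis \eqref{eq:relation between psi and gs} for the lower $\psi^{p}$, and cancels via the same binomial identity $\binom{\ell}{p}\binom{p}{s}s!=\binom{\ell}{s}s!\binom{\ell-s}{p-s}$; the homogeneity \eqref{homogenety w r to xi} is handled in both cases by noting that $\psi^\ell$ and each $J^{\ell-s}g_s$ scale by the common factor $t^{m-\ell}/|t|$. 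Your explicit derivation of the $\psi^\ell$ identities from \eqref{def:of psi l} (rather than citing them) is a welcome self-contained touch but does not change the structure of the proof.
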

\begin{proof}
For any $ t\in \mathbb{R} $, from \cite[Statement 2.8]{Krishnan2019a}, we have
\begin{align}\label{Eq5.10}
	\psi^{\ell}(x+t\xi,\xi)&= \sum_{p=0}^{\ell}\binom{\ell}{p}(-t)^{\ell-p} \psi^p(x,\xi) \nonumber \\
	&= \psi^{\ell}+\sum_{p=0}^{\ell-1}\sum_{s=0}^{p} \binom{\ell}{p}(-t)^{\ell-p}
	(-1)^{s}\binom{p}{s}s!\,J^{p-s} g_{s}, \qquad \mbox{ from \eqref{eq:relation between psi and gs}} \nonumber  \\
	&= \psi^{\ell}
	+\sum_{s=0}^{\ell-1}\sum_{p=s}^{\ell-1} \binom{\ell}{p}(-t)^{\ell-p}
	(-1)^{s}\binom{p}{s}s!\,J^{p-s} g_{s}.
\end{align}
Also, from definition of $J^\ell$, we get
\begin{align*}
J^{\ell}f(x+t\xi,\xi)= \sum_{p=0}^{\ell}\binom{\ell}{p}(-t)^{\ell-p} J^pf(x,\xi).
\end{align*}
By replacing $f$ by $g_s$ and $\ell$ by $\ell -s$, this relation reduces to 
\begin{align*}
J^{\ell-s}g_s(x+t\xi,\xi)= \sum_{p=0}^{\ell-s}\binom{\ell-s}{p}(-t)^{\ell-s-p} J^pg_s(x,\xi).	
	\end{align*}
Consider, 
\begin{align}
\sum_{s=0}^{\ell-1}(-1)^{s}\,\binom{\ell}{s}\,s! \,J^{\ell-s} g_{s}(x+t\xi,\xi)&= \sum_{s=0}^{\ell-1}\sum_{p=0}^{\ell-s}(-1)^{s}\,\binom{\ell}{s}\,s!\binom{\ell-s}{p}(-t)^{\ell-s-p} J^pg_s(x,\xi)\nonumber \\
&=\sum_{s=0}^{\ell-1}\sum_{p=0}^{\ell-s}(-1)^{s}\,\frac{\ell!}{p!\, (\ell-s-p)!}(-t)^{\ell-s-p} J^pg_s(x,\xi)\nonumber \\
&=\sum_{s=0}^{\ell-1}\sum_{p=s}^{\ell}(-1)^{s}\,\frac{\ell!}{(\ell-p)!\, (p-s)!}(-t)^{\ell-p} J^{p-s}g_s(x,\xi) \nonumber \\
&= \sum_{s=0}^{\ell-1}\sum_{p=s}^{\ell}(-1)^s (-t)^{\ell-p} \binom{\ell}{p}\binom{p}{s}\, s!  J^{p-s}g_s(x,\xi) \nonumber \\
&= \sum_{s=0}^{\ell-1}\sum_{p=s}^{\ell-1}(-1)^s (-t)^{\ell-p} \binom{\ell}{p}\binom{p}{s}\, s!  J^{p-s}g_s(x,\xi) \nonumber  \\&\qquad \quad + \sum_{s=0}^{\ell-1}(-1)^s  \binom{\ell}{s}\, s!\,  J^{\ell-s}g_s(x,\xi) \label{Eq5.11}.
\end{align}
Putting these expressions in the definition of function $\chi^\ell$ (see equation \eqref{eq:definition_of_chi_l}), we get  
\begin{equation*}
\chi^{\ell}(x+t\xi,\xi)= \frac{(-1)^\ell}{\ell!} \left(\psi^{\ell}(x+t\xi,\xi) - \sum_{s=0}^{\ell-1}(-1)^{s}\,\binom{\ell}{s}\,s! \,J^{\ell-s} g_{s}(x+t\xi,\xi)\right).
\end{equation*}
The identities \eqref{Eq5.10} and \eqref{Eq5.11} proved above yields
$$
\chi^{\ell}(x+t\xi,\xi)= \frac{(-1)^\ell}{\ell!} \left( \psi^{\ell}(x,\xi)- \sum_{s=0}^{\ell-1}(-1)^s  \binom{\ell}{s}\, s!\,  J^{\ell-s}g_s(x,\xi)\right)
= \chi^{\ell}(x,\xi).
$$
This completes the proof of identity \eqref{translation of chi l}. Next for $t\ne 0$, the definition of $\chi^\ell$ (equation \eqref{eq:definition_of_chi_l}) gives
\begin{align*}
	\chi^{\ell}(x,t\xi)= \frac{(-1)^\ell}{\ell!} \left( \psi^{\ell}(x,t\xi)- \sum_{s=0}^{\ell-1}(-1)^s  \binom{\ell}{s}\, s!\,  J^{\ell-s}g_s(x,t\xi)\right).
\end{align*} 
Then the required relation \eqref{homogenety w r to xi} can be achieved directly from the following two known homogeneity properties (first identity follows from direct computation and the second one is from \cite[Statement 2.8]{Krishnan2019a}) :
\begin{align*}
J^{\ell-s}g_s(x,t\xi)= \frac{t^{m-\ell}}{|t|} J^{\ell-s}g_s(x,\xi)\quad \mbox{ and } \quad 
 \psi^{\ell}(x,t\xi) = \frac{t^{m-\ell}}{|t|}\psi^{\ell}(x,\xi).
\end{align*}
Hence the proof of lemma is complete.
\end{proof}
%Now, we prove the following lemma as a corollary of Lemma \ref{Lm5.1}.
	\begin{lemma}\label{extension lemma}
Let $\chi^\ell$ and $\psi^\ell$ satisfy same conditions as in previous lemma. Also, define the function $\widetilde{\chi}^\ell$ on $\tn$ by 
		\begin{equation*}
		\widetilde{\chi}^{\ell}= \frac{(-1)^\ell}{\ell!} \left(\vf^\ell - \sum_{s=0}^{\ell-1}(-1)^{s}\binom{\ell}{s}\,s!\, I^{\ell-s} g_{s}\right).
		\end{equation*}
Then $\widetilde{\chi}^\ell = \chi^\ell|_{\tn}$ and we can obtain $\chi^\ell$ from $\widetilde{\chi}^\ell$  using the following explicit relation:
		\[\chi^{\ell}(x,\xi)= |\xi|^{m-\ell-1}\,\widetilde{\chi}^{\ell}\left( x-\frac{\l x,\xi \r }{|\xi|^2}\xi, \frac{\xi}{|\xi|}   \right), \qquad \
		(x, \xi) \in \Rb^n \times \Rb^n\setminus \{0\}.\]
\end{lemma}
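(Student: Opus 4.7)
The plan is first to verify that $\widetilde{\chi}^\ell$ coincides with the restriction of $\chi^\ell$ to $\tn$, and then to invoke the two invariance properties from Lemma \ref{Lm5.1} to recover $\chi^\ell$ everywhere on $\Rb^n \times (\Rb^n\setminus\{0\})$ from this restriction.

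For the first step I would observe that on $\tn$ one has $|\xi|=1$ and $\langle x,\xi\rangle=0$, so in the identity \eqref{eq:relation between Ik and Jk} the factor $\langle\xi,x\rangle^{q-\ell}$ kills every summand except $\ell=q$, yielding $J^q h(x,\xi)=I^q h(x,\xi)$ on $\tn$ for any symmetric tensor field $h$. Applying this reduction to $h=f$ and to each $h=g_s$ appearing in the definition \eqref{eq:definition_of_chi_l} of $\chi^\ell$, and comparing with the definition of $\widetilde{\chi}^\ell$, one reads off $\chi^\ell|_{\tn}=\widetilde{\chi}^\ell$ directly.

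For the second step I would combine the homogeneity and translation invariance from Lemma \ref{Lm5.1}. Fix an arbitrary $(x,\xi)\in \Rb^n\times(\Rb^n\setminus\{0\})$. Applying the homogeneity relation \eqref{homogenety w r to xi} with $t=1/|\xi|>0$ gives
\[
\chi^\ell(x,\xi/|\xi|)=\frac{(1/|\xi|)^{m-\ell}}{1/|\xi|}\,\chi^\ell(x,\xi)=|\xi|^{\ell-m+1}\chi^\ell(x,\xi),
\]
which rearranges to $\chi^\ell(x,\xi)=|\xi|^{m-\ell-1}\chi^\ell(x,\xi/|\xi|)$. Next, applying the translation invariance \eqref{translation of chi l} with the unit direction $\xi/|\xi|$ and the shift $t=-\langle x,\xi\rangle/|\xi|^2$ gives
\[
\chi^\ell(x,\xi/|\xi|)=\chi^\ell\!\left(x-\frac{\langle x,\xi\rangle}{|\xi|^2}\xi,\ \xi/|\xi|\right).
\]
The point on the right-hand side lies in $\tn$, so by the first step we may replace $\chi^\ell$ there by $\widetilde{\chi}^\ell$, producing the claimed formula.

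There is no genuine obstacle: the argument is a short bookkeeping exercise once the reduction $J^q|_{\tn}=I^q$ is noticed. The only mild point of care is handling the absolute value in the homogeneity relation, but since $t=1/|\xi|$ is positive the $|t|$ simply becomes $1/|\xi|$ and the powers of $|\xi|$ combine cleanly into $|\xi|^{m-\ell-1}$.
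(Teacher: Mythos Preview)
Your argument is correct and matches the paper's proof: both use the translation invariance \eqref{translation of chi l} and the homogeneity \eqref{homogenety w r to xi} from Lemma \ref{Lm5.1} to reduce $\chi^\ell(x,\xi)$ to its value at the corresponding point of $\tn$, and your explicit verification that $\widetilde{\chi}^\ell=\chi^\ell|_{\tn}$ via $J^q|_{\tn}=I^q$ is a helpful addition the paper leaves implicit. One small slip: with the unit direction $\xi/|\xi|$ the required shift is $t=-\langle x,\xi\rangle/|\xi|$, not $-\langle x,\xi\rangle/|\xi|^2$, though the displayed identity you derive from it is correct.
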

	\begin{proof}
For any $ t,s\in\mathbb{R} $ with $ s\ne 0 $ equations \eqref{translation of chi l} and \eqref{homogenety w r to xi} gives 
\begin{align*}
	\chi^{\ell}(x+t\xi,s\xi)=\frac{s^{m-\ell}}{|s|}\chi^{\ell}(x,\xi).
\end{align*}
Now choosing $ t= -  \frac{\l x,\xi\r}{|\xi|^2}$ and $ s = \frac{1}{|\xi|} $, this gives
\begin{align*}
\chi^{\ell}\left( x-\frac{\l x,\xi \r }{|\xi|^2}\xi, \frac{\xi}{|\xi|}   \right)= \frac{1}{|\xi|^{m-\ell-1}} \chi^{\ell}(x,\xi).	
\end{align*}
Therefore
\begin{align*}
\chi^{\ell}(x,\xi)&= |\xi|^{m-\ell-1}\chi^{\ell}\left( x-\frac{\l x,\xi \r }{|\xi|^2}\xi, \frac{\xi}{|\xi|}   \right)\\
&= |\xi|^{m-\ell-1}\widetilde{\chi^{\ell}}\left( x-\frac{\l x,\xi \r }{|\xi|^2}\xi, \frac{\xi}{|\xi|}   \right).
\end{align*}
	\end{proof}
The next three lemmas are direct adaptation of results from \cite{Sharafutdinov1994} and \cite{Krishnan2019a} and hence  we state  without giving their proofs.
\begin{lemma}\cite[Theorem 2.10]{Sharafutdinov1994}\label{general johns condition} 
	For every indices $ 1\le i_1,\dots,i_r\le n $ and each  $ h\in \mathcal{S}({S^r(\Rn)}) $, the next equality holds
	\begin{equation}
	J_{i_{r+1}j_{r+1}} \cdots J_{i_1j_1} J^0h=0.
	\end{equation}
    \end{lemma}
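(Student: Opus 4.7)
The plan is to induct on the rank $r$ of the tensor field $h$, using the classical Fritz John equation (for scalar functions) as the base case and a direct computation that reduces the rank of the relevant tensor field by one with each application of a John operator $J_{ij}$.

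For the base case $r = 0$, where $h$ is a scalar and $J^0 h(x,\xi) = \int_\Rb h(x+t\xi)\, dt$, I would differentiate under the integral sign by the chain rule to obtain $\frac{\partial^2}{\partial x^i \partial \xi^j} J^0 h(x,\xi) = \int_\Rb t\, \partial_i \partial_j h(x+t\xi)\, dt$. This expression is manifestly symmetric in $(i,j)$, so $J_{i_1 j_1} J^0 h = 0$, which is the classical John equation.

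For the inductive step I would compute $J_{i_1 j_1} J^0 h$ explicitly for a symmetric rank-$r$ tensor field $h$. Writing $J^0 h(x,\xi) = \int_\Rb h_{\alpha_1\ldots\alpha_r}(x+t\xi)\,\xi^{\alpha_1}\cdots\xi^{\alpha_r}\, dt$ and differentiating with respect to $\xi^{j_1}$ produces a ``$t$-term'' of the form $t\,\partial_{j_1} h_{\alpha_1\ldots\alpha_r}\,\xi^{\alpha_1}\cdots\xi^{\alpha_r}$ together with a ``collapse term'' $r\, h_{j_1 \alpha_2\ldots\alpha_r}\,\xi^{\alpha_2}\cdots\xi^{\alpha_r}$, obtained by differentiating the $\xi$-monomial and exploiting the symmetry of $h$. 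Taking $\partial/\partial x^{i_1}$ and antisymmetrizing in $(i_1,j_1)$ kills the $t$-term, since $t(\partial_{i_1}\partial_{j_1} - \partial_{j_1}\partial_{i_1})h_{\alpha_1\ldots\alpha_r} = 0$, leaving
\[
J_{i_1 j_1} J^0 h(x,\xi) \;=\; r\, J^0 \widetilde h(x,\xi),
\]
where $\widetilde h_{\alpha_2\ldots\alpha_r}(x) := \partial_{i_1} h_{j_1 \alpha_2\ldots\alpha_r}(x) - \partial_{j_1} h_{i_1 \alpha_2\ldots\alpha_r}(x)$ is a Schwartz symmetric $(r-1)$-tensor field, with symmetry in $\alpha_2,\ldots,\alpha_r$ inherited from that of $h$. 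Applying the inductive hypothesis to $\widetilde h$ then yields $J_{i_{r+1}j_{r+1}}\cdots J_{i_2 j_2}\, J^0 \widetilde h = 0$, which closes the induction.

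The sole technical point, and the only place where care is required, is the chain-rule bookkeeping that exhibits $J_{i_1 j_1} J^0 h$ as $r$ times $J^0$ applied to a Schwartz symmetric tensor field of rank $r-1$. I do not expect this to be a genuine obstacle: the cancellation rests only on the commutativity of partial derivatives, and the surviving term is automatically in $J^0$-form because all $\xi$ factors remain outside the derivatives acting on $h$. After this reduction, the lemma is a transparent recursion that trades one $\xi$-factor and one derivative of $h$ for a tensor of rank lowered by one, until the scalar John equation closes the argument.
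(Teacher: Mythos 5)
Your argument is correct. The paper itself gives no proof of this lemma --- it is stated as a direct citation of \cite[Theorem 2.10]{Sharafutdinov1994}, with the explicit remark that the proof is omitted --- so there is nothing to compare against except the classical source, whose argument is essentially the same reduction you describe. Your key identity checks out: differentiating $J^0h$ in $\xi^{j_1}$ produces the $t$-weighted term plus $r\,h_{j_1\alpha_2\dots\alpha_r}\xi^{\alpha_2}\cdots\xi^{\alpha_r}$ by symmetry of $h$; after applying $\partial/\partial x^{i_1}$ and antisymmetrizing, the $t$-term cancels by equality of mixed partials and one is left with $J_{i_1j_1}J^0h=r\,J^0\widetilde h$ for the Schwartz symmetric $(r-1)$-tensor field $\widetilde h_{\alpha_2\dots\alpha_r}=\partial_{i_1}h_{j_1\alpha_2\dots\alpha_r}-\partial_{j_1}h_{i_1\alpha_2\dots\alpha_r}$, and the remaining $r$ John operators annihilate $J^0\widetilde h$ by the inductive hypothesis (the operator count matches exactly, and the $J_{ij}$ commute with scalars and with each other, so the order of application is immaterial). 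The base case $r=0$ is the classical John equation, as you say.
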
 
\begin{lemma}\cite{Krishnan2019a} \label{Lm 4.1}
	For every $\ell=0,1,\dots,k$ and for every integer $k\ge 0$, the following equality holds:
	\begin{equation}\label{Eq4.1}
	\begin{aligned}
	\l\xi,\partial_x\r^\ell\psi^k=\begin{cases}
	(-1)^\ell\,{k\choose \ell}\,\ell!\,\psi^{k-\ell} &\mbox{if}\quad\ell\le k,\\
	0 &\mbox{if}\quad\ell>k.
	\end{cases}
	\end{aligned}
	\end{equation}
\end{lemma}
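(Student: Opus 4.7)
The plan is to prove the formula by induction on $\ell$, reducing everything to the single base identity
\[
\langle \xi, \partial_x\rangle \psi^k = -k\,\psi^{k-1} \qquad (k\ge 1),
\]
together with the observation that $\langle \xi,\partial_x\rangle \psi^0 = 0$. Since $\psi^\ell = J^\ell f$ (by the relation \eqref{eq:relation between Ik and Jk} used to define $\psi^\ell$ in \eqref{def:of psi l}), I would work directly with the extended integral representation
\[
J^k f(x,\xi) = \int_{-\infty}^{\infty} t^k \langle f(x+t\xi),\xi^m\rangle\, dt.
\]

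The key observation is that $\langle \xi,\partial_x\rangle$ applied to the integrand is a $t$-derivative: for any smooth function $h$ we have $\langle \xi,\partial_x\rangle \big[ h(x+t\xi)\big] = \tfrac{d}{dt}\big[h(x+t\xi)\big]$. Applying this with $h(y) = \langle f(y),\xi^m\rangle$ and using integration by parts (boundary terms vanish because $f\in\Sc(S^m)$, so $\langle f(x+t\xi),\xi^m\rangle$ decays rapidly in $t$), one obtains
\[
\langle \xi,\partial_x\rangle J^k f(x,\xi) = \int_{-\infty}^{\infty} t^k \tfrac{d}{dt}\langle f(x+t\xi),\xi^m\rangle\, dt = -k \int_{-\infty}^{\infty} t^{k-1}\langle f(x+t\xi),\xi^m\rangle\, dt = -k\, J^{k-1}f,
\]
which is the base identity. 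For $k=0$ the same integration by parts gives directly $\langle \xi,\partial_x\rangle J^0 f = 0$.

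Now I would induct on $\ell$. The case $\ell=0$ is trivial since $\binom{k}{0}0! = 1$. Assuming the formula for some $\ell < k$, I apply $\langle \xi,\partial_x\rangle$ once more to the inductive identity and use the base identity on $\psi^{k-\ell}$:
\[
\langle \xi,\partial_x\rangle^{\ell+1}\psi^k = (-1)^\ell \binom{k}{\ell}\ell!\,\langle\xi,\partial_x\rangle\psi^{k-\ell} = (-1)^{\ell+1}\binom{k}{\ell}\ell!\,(k-\ell)\,\psi^{k-\ell-1}.
\]
The elementary identity $\binom{k}{\ell}\ell!(k-\ell) = \binom{k}{\ell+1}(\ell+1)!$ then gives the formula at $\ell+1$, closing the induction. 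For $\ell > k$, once the induction reaches $\ell = k$ we have $\langle\xi,\partial_x\rangle^k\psi^k = (-1)^k k!\,\psi^0$, and one further application of $\langle\xi,\partial_x\rangle$ annihilates $\psi^0$ by the base case; subsequent applications continue to yield $0$.

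No step here is a real obstacle; the only point requiring minor care is justifying the vanishing of the boundary terms in the integration by parts, which is immediate from $f\in\Sc(S^m)$ and hence $t\mapsto t^{k-1}\langle f(x+t\xi),\xi^m\rangle$ being Schwartz in $t$ for each fixed $(x,\xi)$ with $\xi\neq 0$.
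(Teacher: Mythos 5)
The paper itself offers no proof of this lemma --- it is imported from \cite{Krishnan2019a} and stated without argument --- so there is nothing internal to compare against; I can only judge your proof on its own terms. Your computation is internally correct: the identity $\l\xi,\partial_x\r\big[h(x+t\xi)\big]=\tfrac{d}{dt}\big[h(x+t\xi)\big]$, the integration by parts giving $\l\xi,\partial_x\r J^k f=-k\,J^{k-1}f$, and the induction with $\binom{k}{\ell}\ell!\,(k-\ell)=\binom{k}{\ell+1}(\ell+1)!$ are all fine.

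The genuine gap is in the very first line, where you set $\psi^k=J^k f$. In the paper this lemma is invoked in the \emph{sufficiency} half of Theorem \ref{th:range characterisation for I-k} (and inside Lemma \ref{John's condition }), where $\psi^\ell$ is built by formula \eqref{def:of psi l} out of an arbitrary tuple $(\vf^0,\dots,\vf^k)\in(\Sc(\tn))^{k+1}$ that is \emph{not yet known} to lie in the range of $\Ic^k$ (the occurrence of $I^r f$ in \eqref{def:of psi l} should be read as $\vf^r$; otherwise condition (2) of the range theorem would not be checkable from the data). Your argument therefore proves the identity only on the range of $\Ic^k$, which is circular for the purpose it serves in the paper. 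The repair is short and keeps your induction intact: establish the base identity $\l\xi,\partial_x\r\psi^k=-k\,\psi^{k-1}$ by differentiating the defining formula directly. Since $x\mapsto x-\frac{\l x,\xi\r}{|\xi|^2}\xi$ is invariant under $x\mapsto x+t\xi$, the operator $\l\xi,\partial_x\r$ annihilates each factor $\vf^r\!\left(x-\frac{\l x,\xi\r}{|\xi|^2}\xi,\frac{\xi}{|\xi|}\right)$, and it sends $\l\xi,x\r^{k-r}$ to $(k-r)|\xi|^2\l\xi,x\r^{k-r-1}$; the combinatorial identity $(k-r)\binom{k}{r}=k\binom{k-1}{r}$ then matches the result term by term with $-k\,\psi^{k-1}$. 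With that substitution your induction on $\ell$ goes through verbatim and in the generality the paper actually needs.
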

\begin{lemma}\cite[Lemma 2.6]{Krishnan2019a} \label{Lm 4.2}
	Let a function $\psi\in C^\infty\big({\Rn}\times{\Rn}\setminus\{0\})\big)$ be positively homogeneous in the second argument
	\begin{equation}
	\psi(x,t\xi)=t^\lambda\psi(x,\xi)\quad(t>0).
	\label{Eq2.10}
	\end{equation}
	Assume the restriction $\psi|_{T{\mathbb S}^{n-1}}$ to belong to ${\mathcal S}(T{\mathbb S}^{n-1})$. Assume also that restrictions to ${\mathcal S}(T{\mathbb S}^{n-1})$ of the function $\l\xi,\partial_x\r\psi$ and of all its derivatives belong to ${\mathcal S}(T{\mathbb S}^{n-1})$, i.e.,
	\begin{equation}
	\left.\frac{\partial^{k+p}(\l\xi,\partial_x\r\psi)}{\partial x^{i_1}\dots\partial x^{i_k}\partial \xi^{j_1}\dots\partial \xi^{j_p}}\right|_{T{\mathbb S}^{n-1}}\in{\mathcal S}(T{\mathbb S}^{n-1})\quad\mbox{for all}\quad 1\le i_1,\dots,i_k,j_1,\dots,j_p\le n.
	\end{equation}
	Then the restriction to ${\mathcal S}(T{\mathbb S}^{n-1})$ of every derivative of $\psi$ also belongs to ${\mathcal S}(T{\mathbb S}^{n-1})$, i.e.,
	\begin{equation}
	\left.\frac{\partial^{k+p}\psi}{\partial x^{i_1}\dots\partial x^{i_k}\partial \xi^{j_1}\dots\partial \xi^{j_p}}\right|_{T{\mathbb S}^{n-1}}
	\in{\mathcal S}(T{\mathbb S}^{n-1})\quad\mbox{for all}\quad 1\le i_1,\dots,i_k,j_1,\dots,j_p\le n.
	\end{equation}
\end{lemma}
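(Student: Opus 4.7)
I would realize $\tn$ as the codimension-two submanifold $\{|\xi|=1,\ \langle x,\xi\rangle=0\}$ of $\Rn\times(\Rn\setminus\{0\})$. The homogeneity of $\psi$ in $\xi$ handles the radial $\xi$-direction via $R_\xi\psi=\lambda\psi$, where $R_\xi=\xi^k\partial_{\xi^k}$, so the only genuinely transverse direction to $\tn$ left to control is the direction of $\xi$ inside $x$-space, whose directional derivative is precisely $D:=\langle\xi,\partial_x\rangle$. The plan is to split any ambient derivative $\partial^\alpha_x\partial^\beta_\xi\psi$ restricted to $\tn$ into tangential derivatives (with respect to $\tn$) of $\psi|_{\tn}$ and of $D^k\psi|_{\tn}$ for various $k\ge 1$, and then to handle each piece separately.

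\emph{Step 1: every derivative of $D^k\psi$ restricts to a Schwartz function on $\tn$ for $k\ge 1$.} Write $D^k\psi=D^{k-1}(D\psi)$. Because $[\partial_{x^i},D]=0$ and $[\partial_{\xi^j},D]=\partial_{x^j}$, an induction on $|\beta|$ yields the normal-form identity
\[
\partial^\beta_\xi D^{k-1}=\sum_{\nu\le\beta,\,|\nu|\le k-1} c_\nu\, D^{k-1-|\nu|}\,\partial^\nu_x\,\partial^{\beta-\nu}_\xi
\]
with positive combinatorial constants $c_\nu$. Applying this to $D\psi$, prepending $\partial^\alpha_x$ (which commutes with $D$), and expanding $D^{k-1-|\nu|}=\xi^{i_1}\cdots\xi^{i_{k-1-|\nu|}}\partial_{x^{i_1}}\cdots\partial_{x^{i_{k-1-|\nu|}}}$, one rewrites $\partial^\alpha_x\partial^\beta_\xi D^k\psi$ as a finite sum of terms of the form $P(\xi)\,\partial^{\alpha'}_x\partial^{\beta'}_\xi(D\psi)$ with $P$ a monomial in $\xi$. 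Restricted to $\tn$ the factor $P(\xi)$ is bounded because $|\xi|=1$, while by hypothesis $\partial^{\alpha'}_x\partial^{\beta'}_\xi(D\psi)|_{\tn}\in\mathcal{S}(\tn)$; summing shows $\partial^\alpha_x\partial^\beta_\xi D^k\psi|_{\tn}\in\mathcal{S}(\tn)$ for all $(\alpha,\beta)$ and all $k\ge 1$.

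\emph{Step 2: local product coordinates and conclusion.} Near $(x_0,\xi_0)\in\tn$ I would introduce coordinates $(z,\omega,t,r)$ on $\Rn\times(\Rn\setminus\{0\})$ by writing $\xi=r\hat\xi(\omega)$ with $\hat\xi(\omega)\in\Sb^{n-1}$ a local chart on the sphere and $x=\sum_i z^i e_i(\omega)+t\hat\xi(\omega)$, where $\{e_i(\omega)\}$ is a smooth local orthonormal frame of $\hat\xi(\omega)^\perp$. Then $\tn=\{t=0,\,r=1\}$, and a direct check (using $e_i\cdot\hat\xi\equiv 0$) shows that the vector fields $\partial_{z^i}$ and $\partial_{\omega^j}$ are tangent to $\tn$, while the two normal vector fields satisfy $\partial_t=r^{-1}D$ and $\partial_r=r^{-1}R_\xi$. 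By homogeneity, $\partial_r^d$ at $r=1$ acts as multiplication by the constant $\lambda(\lambda-1)\cdots(\lambda-d+1)$, and $\partial_t^c$ at $t=0,\,r=1$ acts as $D^c$. Converting by the chain rule gives
\[
\partial^\alpha_x\partial^\beta_\xi\psi\big|_{\tn}=\sum_{a,b,c} A_{abc}(z,\omega)\,\partial^a_z\partial^b_\omega\bigl(D^c\psi|_{\tn}\bigr),
\]
with coefficients $A_{abc}$ smooth and bounded on the chart together with their tangential derivatives (the constants from $\partial^d_r$ being absorbed). Since $\partial^a_z\partial^b_\omega$ preserve $\mathcal{S}(\tn)$ and each $D^c\psi|_{\tn}$ belongs to $\mathcal{S}(\tn)$ (by hypothesis for $c=0$, by Step 1 for $c\ge 1$), every summand lies in $\mathcal{S}(\tn)$; a partition of unity subordinate to a locally finite cover by such charts completes the proof.

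The main obstacle will be the chain-rule conversion in Step 2: expressing $\partial^\alpha_x$ and $\partial^\beta_\xi$ in the curvilinear frame $(\partial_z,\partial_\omega,\partial_t,\partial_r)$ produces Jacobian coefficients that involve $1/r$ and the frame derivatives $\partial_{\omega^j}e_i$, so one must carefully verify that after restriction to $r=1$ these coefficients remain bounded and smooth on the chart, along with all their tangential derivatives. The conceptual content, namely the tangential/normal split isolating $D$ together with the commutator $[\partial_{\xi^j},D]=\partial_{x^j}$, is captured entirely in Steps 1 and 2; the rest is routine bookkeeping.
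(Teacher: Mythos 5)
The paper states this lemma without proof, citing it as \cite[Lemma 2.6]{Krishnan2019a} (the text explicitly says these lemmas are "direct adaptation of results from \cite{Sharafutdinov1994} and \cite{Krishnan2019a} and hence we state without giving their proofs"), so there is no in-paper argument to compare against. Your proposal is essentially correct and captures the same mechanism as the cited source (which itself descends from Sharafutdinov's Lemma 2.10.2): the only non-tangential directions at a point of $T\Sb^{n-1}$ are the radial $\xi$-direction, killed by Euler's relation for the homogeneity \eqref{Eq2.10}, and the $\xi$-direction inside $x$-space, which is exactly $\langle\xi,\partial_x\rangle$; the commutators $[\partial_{x^i},\langle\xi,\partial_x\rangle]=0$ and $[\partial_{\xi^j},\langle\xi,\partial_x\rangle]=\partial_{x^j}$ then reduce every derivative of $\langle\xi,\partial_x\rangle^c\psi$, $c\ge 1$, to polynomial-in-$\xi$ combinations of derivatives of $\langle\xi,\partial_x\rangle\psi$, which the hypothesis controls. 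The cited proof performs this split algebraically on the ambient derivatives and runs an induction on the order; your explicit curvilinear chart $(z,\omega,t,r)$ is the same idea in geometric clothing. One inaccuracy to fix in Step 2: the Jacobian coefficients $A_{abc}$ are \emph{not} bounded on the chart — since $z^k=\langle x,e_k(\omega)\rangle$ and $t=\langle x,\hat\xi(\omega)\rangle$ depend on $\xi$ through the moving frame, $\partial z^k/\partial\xi^j$ and $\partial t/\partial\xi^j$ are linear in $x$, so iterated $\xi$-derivatives produce coefficients growing polynomially in $z$. This does not break the argument, because multiplication by functions of polynomial growth (with polynomially growing tangential derivatives) preserves ${\mathcal S}(T\Sb^{n-1})$, but you should state "polynomially bounded" rather than "bounded" and invoke that property of the Schwartz seminorms explicitly.
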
	
	
	\begin{lemma}\label{John's condition }
		Let $ \psi^{k} $ satisfies 
			\begin{equation}\label{Eq36}
			J_{i_1j_1} \cdots J_{i_{m+1}j_{m+1}}\psi^k=0
			\end{equation}
	and \begin{equation}\label{Eq37}
	\psi^{k}(x,t\xi)= \frac{t^{m-k}}{|t|}\psi^{k} (x,\xi)
	\end{equation}		
for each $ 0\le \ell\le k <m $.			
				For each indices $ 1\le i_1,i_2,\dots,i_{\ell}\le n $, let $ \Psi_{i_1\cdots i_\ell},\ 0\le \ell \le k<m$ denotes the function defined by 
				\begin{equation}\label{eq:John's condition}
				\Psi_{i_1\dots i_\ell}= \frac{ (m-\ell)!}{m!}\sigma(i_1\dots i_\ell) \Bigg(\sum\limits_{p=0}^{\ell}(-1)^p\binom{\ell}{p}\,\frac{\partial^\ell \psi^p}{\partial x^{i_1}\dots\partial x^{i_p}\partial\xi^{i_{p+1}}\dots\partial\xi^{i_\ell}}\Bigg).
				\end{equation}
			Then $ \Psi_{i_1\cdots i_\ell},\ 0\le \ell \le k<m$ satisfies the following John's condition
				\begin{equation}\label{Eq5.6}
				J_{i_1j_1} \cdots J_{i_{m-\ell+1}j_{m-\ell+1}} \Psi_{i_1\dots i_{\ell}}=0.
				\end{equation}
				\end{lemma}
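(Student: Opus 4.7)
My plan is to reduce everything to the single function $\psi^k$ by using Lemma \ref{Lm 4.1}, which gives
\[
\psi^p=\frac{(-1)^{k-p}p!}{k!}\,\langle\xi,\partial_x\rangle^{k-p}\psi^k\quad(0\le p\le k).
\]
Substituting this into the definition of $\Psi_{i_1\dots i_\ell}$ expresses it as
\[
\Psi_{i_1\dots i_\ell}=\frac{(-1)^k(m-\ell)!}{m!\,k!}\,\sigma(i_1\dots i_\ell)\sum_{p=0}^{\ell}\binom{\ell}{p}p!\,D_{\ell,p}\langle\xi,\partial_x\rangle^{k-p}\psi^k,
\]
where $D_{\ell,p}=\partial_{x^{i_1}}\dots\partial_{x^{i_p}}\partial_{\xi^{i_{p+1}}}\dots\partial_{\xi^{i_\ell}}$. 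The next observation is that $J_{ij}$ commutes with $\partial_{x^a}$, $\partial_{\xi^b}$ and also with $\langle\xi,\partial_x\rangle$; indeed the $\xi$-commutator gives $[J_{ij},\xi^a]\partial_{x^a}=(\delta^a_j\partial_{x^i}-\delta^a_i\partial_{x^j})\partial_{x^a}=0$. Therefore the $m-\ell+1$ John operators can be slid all the way through to act directly on $\psi^k$, and the whole task reduces to showing
\[
\sigma(i_1\dots i_\ell)\sum_{p=0}^{\ell}\binom{\ell}{p}p!\,D_{\ell,p}\langle\xi,\partial_x\rangle^{k-p}H=0,\qquad H:=J_{i_1j_1}\dots J_{i_{m-\ell+1}j_{m-\ell+1}}\psi^k.
\]
Note that $H$ is positive-homogeneous of degree $\lambda=\ell-k-2$ in $\xi$ (using \eqref{Eq37}) and, crucially, still satisfies $J_{a_1b_1}\cdots J_{a_\ell b_\ell}H=0$ for all indices, since combining with the existing $m-\ell+1$ operators gives $m+1$ John operators on $\psi^k$.

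The workhorse identity is the homogeneity relation: for $g$ positive-homogeneous of degree $\lambda$ in $\xi$, one checks by direct computation that
\[
\xi^a J_{aj}g=\langle\xi,\partial_x\rangle\partial_{\xi^j}g-\lambda\,\partial_{x^j}g.
\]
Applied repeatedly to $H$ (and its $\partial_x,\partial_\xi$-derivatives, all of which have well-defined $\xi$-homogeneity), this allows every remaining factor of $\langle\xi,\partial_x\rangle$ to be traded either for a factor of $\partial_{x^{\cdot}}$ or for a $\xi$-contracted John operator. The latter kind of term, once it accumulates (together with the factors of $J_{ij}$ already sitting inside $H$) a total of $\ell+1$ John operators, vanishes by $J^\ell H=0$.

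To organize the bookkeeping I would expand each $D_{\ell,p}\langle\xi,\partial_x\rangle^{k-p}H$ by pushing the $\partial_\xi$'s past the $\langle\xi,\partial_x\rangle^{k-p}$'s via $\partial_{\xi^j}\langle\xi,\partial_x\rangle^r=\langle\xi,\partial_x\rangle^r\partial_{\xi^j}+r\,\langle\xi,\partial_x\rangle^{r-1}\partial_{x^j}$, producing a sum indexed by $s\in\{0,\dots,\ell-p\}$ (the number of $\partial_\xi$'s that get converted to $\partial_x$'s). Using the standard identity $\binom{\ell}{p}\binom{\ell-p}{s}=\binom{\ell}{p+s}\binom{p+s}{p}$ to reindex $q=p+s$, the coefficient of $\langle\xi,\partial_x\rangle^{k-q}D^*_{\ell,q}H$ (with $D^*_{\ell,q}=\sigma(i_1\dots i_\ell)\partial_{x^{i_1}}\dots\partial_{x^{i_q}}\partial_{\xi^{i_{q+1}}}\dots\partial_{\xi^{i_\ell}}$) becomes an explicit binomial sum; combined with the homogeneity identity—which converts the leftover $\langle\xi,\partial_x\rangle$-factors into $\xi$-contracted $J$'s plus multiples of $\partial_x$—the $J$-terms are killed by $J^{\ell}H=0$ and the $\partial_x$-only residues cancel after symmetrization.

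\textbf{Main obstacle.} The nontrivial part is precisely this final combinatorial cancellation: one must check that, after expanding and applying the homogeneity identity the appropriate number of times, the surviving $\partial_x$-only coefficients sum to zero. I would prove this cancellation by induction on $\ell$, with base case $\ell=0$ immediate (since $\Psi=\psi^0=\frac{(-1)^k}{k!}\langle\xi,\partial_x\rangle^k\psi^k$, so $J^{m+1}\Psi=0$ by commuting $J_{ij}$ past $\langle\xi,\partial_x\rangle$ and invoking \eqref{Eq36}) and the inductive step using Pascal's identity to match the coefficients produced at level $\ell$ against those at level $\ell-1$. The small cases $m=2,k=\ell=1$ and $m=3,k=\ell=2$ can be checked by hand to confirm the sign and coefficient choices.
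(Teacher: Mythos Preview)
Your route is genuinely different from the paper's. The paper does \emph{not} fix $m$ and induct on~$\ell$; instead it inducts on~$m$, following the scheme of \cite[Lemma~2.7]{Krishnan2019a}. Concretely, after the same first step you take (substituting $\psi^p=\frac{(-1)^{k-p}p!}{k!}\langle\xi,\partial_x\rangle^{k-p}\psi^k$ to rewrite the claim as \eqref{modified johns condition}), the paper passes from $(m,\ell)$ to $(m+1,\ell+1)$ by introducing the auxiliary function
\[
\psi^{\ell}_{i_{\ell+1}}=\Big(\tfrac{1}{\ell+1}\,\partial_{\xi^{i_{\ell+1}}}\langle\xi,\partial_x\rangle+\partial_{x^{i_{\ell+1}}}\Big)\psi^{\ell+1},
\]
checks that it again satisfies \eqref{Eq36}--\eqref{Eq37} (with $m$ in place of $m+1$), applies the inductive hypothesis to $\psi^{\ell}_{i_{\ell+1}}$, and then unwinds. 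The $\ell=0$ case at each $m$ is handled separately, exactly as you note, via $[\,J_{ij},\langle\xi,\partial_x\rangle\,]=0$. So where you try to push all the John operators through at once and argue a direct cancellation on $H=J_{i_1j_1}\cdots J_{i_{m-\ell+1}j_{m-\ell+1}}\psi^k$, the paper never forms $H$; it lowers~$m$ step by step.

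The gap in your plan is the step you flag yourself. After your commutator expansion and the reindexing $q=p+s$, a term coming from the original index $p$ carries only $\ell-p$ ``available'' $\partial_\xi$'s, hence at most $\ell-p$ applications of the homogeneity identity $\langle\xi,\partial_x\rangle\partial_{\xi^j}g=\xi^aJ_{aj}g+\lambda\,\partial_{x^j}g$. So for $p\ge 1$ you cannot manufacture $\ell$ John operators on that term, and the blanket assertion ``the $J$-terms are killed by $J^{\ell}H=0$'' is not justified: many of the $J$-terms carry strictly fewer than $\ell$ extra John operators and do not vanish individually. The required mechanism is therefore not ``each $J$-term dies'' but a cancellation \emph{among} terms with different numbers of John operators, together with the $\partial_x$-only residues; your sketch does not supply this. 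For $\ell=1$ everything does collapse (there $J_{ab}H=0$ directly), but already for $\ell=2$ one has $J_{ab}H\neq 0$ in general and the bookkeeping is substantially more delicate than Pascal's identity alone. If you want to pursue this direct route, a cleaner organizing principle is to prove the operator identity (after symmetrization) purely in terms of the relations $J^{\ell}H=0$ and the Euler relation for degree $\ell-k-2$, e.g.\ by showing that the symmetrized sum equals a fixed linear combination of expressions of the form $\langle\xi,\partial_x\rangle^{k-\ell}\,\xi^{a_1}\!\cdots\xi^{a_r}J_{a_1 b_1}\cdots J_{a_r b_r}(\cdots)H$ with $r\ge 1$, each of which then vanishes; but that identity must be established, not asserted.
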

				\begin{remark}
						The proof of this lemma is very similar to  \cite[Lemma 2.7]{Krishnan2019a}. So we do not present the complete proof and only indicate the key arguments here. We should mention that Lamma \ref{John's condition } is new and has not been proved in the earlier work \cite{Krishnan2019a}. 
				\end{remark}

			\begin{proof}
				According to Lemma \ref{Lm 4.1}, we have
			for every $0\le  r\le k $
			\begin{equation}\label{Eq40}
			\begin{aligned}
			\l\xi,\partial_x\r^{k-r}\psi^{k}=				(-1)^{k-r}\,{k\choose r}\,(k-r)!\,\psi^{r}.
			\end{aligned}
			\end{equation}
	Using above relation with  $k= \ell$ and $r= p$  in equation \eqref{Eq5.6},  we obtain
			\begin{align}\label{modified johns condition}
			J_{i_1j_1} \cdots J_{i_{m-\ell+1}j_{m-\ell+1}} \sigma(i_1\dots i_\ell) \Bigg(\sum\limits_{p=0}^{\ell}\frac{1}{(\ell-p)!}\,\frac{\partial^\ell \l\xi,\PD_x\r^{\ell-p}}{\partial x^{i_1}\dots\partial x^{i_p}\partial\xi^{i_{p+1}}\dots\partial\xi^{i_l}}\Bigg) \psi^{\ell}=0.
			\end{align} 	 
			Thus proving \eqref{Eq5.6} is equivalent to prove \eqref{modified johns condition}. 
			
			The proof will be based on induction argument on $ m $. For $m =0$, we have $ \ell=k=0 $ and it is easy to see that the equation  \eqref{modified johns condition} holds. Assume that the relation \eqref{modified johns condition} is true for some $ m $ with $k < m$ and for all $0 \le  \ell \le k $. Then, we aim to verify the result for $m+1$  with $ 1\le \ell +1\le k+1<m+1 $. 

For every index $i_{\ell+1}$ satisfying $1\le i_{\ell+1}\le n$, we define the function $\psi^{\ell}_{i_{\ell+1}}$ by
			\begin{equation*}
			\psi^{\ell}_{i_{\ell+1}}(x,\xi)=\left(\frac{1}{\ell+1}\,\frac{\PD}{\PD\xi^{i_{\ell+1}}} \l \xi,\PD_{x}\r + \frac{\PD}{\PD x^{i_{\ell+1}}} \right) \psi^{\ell+1}(x,\xi).
			\label{Eq2.36}
			\end{equation*}
			In the light of the homogeneity relation\eqref{Eq37} together with equation \eqref{Eq40},  the equation reduces to
			\begin{equation*}
			\psi^{\ell}_{i_{\ell+1}}(x,t\xi)=\frac{t^{m-\ell}}{|t|}\psi^{\ell}_{i_{\ell+1}}(x,\xi)\quad(0\neq t\in{\mathbb{R}}).
			\end{equation*}	
Then, we have from  \cite[Lemma 2.7]{Krishnan2019a}	  $\psi^{\ell}_{i_{\ell+1}}$ satisfies \eqref{Eq36}.		
	%	For fix indices $(i_1,j_1,\dots,i_{m-\ell+1},j_{m+1})$ and set $ \wt{J}= 	J_{i_1j_1}\dots J_{i_{m+1} j_{m+1}}$. Equation \eqref{Eq36} is now written as $J_{ij}\widetilde{J}\psi^{\ell+1}=0$ or
Thus for every ${i_{\ell+1}}$ and for each $ 0\le \ell\le k<m $, the function $\psi^{\ell}_{i_{\ell+1}}$ satisfies hypotheses of Lemma \ref{John's condition }. By the induction hypothesis \eqref{modified johns condition}, for all $1\le i,j,{i_{\ell+1}},i_1,\dots, i_{\ell}\le n$,
			\begin{align*}
			\sigma(i_1\dots i_\ell) \Bigg(\sum\limits_{p=0}^{\ell}\frac{1}{(\ell-p)!}\,\frac{\partial^\ell \l\xi,\PD_x\r^{\ell-p}}{\partial x^{i_1}\dots\partial x^{i_p}\partial\xi^{i_{p+1}}\dots\partial\xi^{i_l}}\Bigg)J_{i_1j_1} \cdots J_{i_{m-\ell+1}j_{m-\ell+1}} \psi^{\ell}_{i_{\ell+1}}=0.
			\end{align*}
			Now using the similar arguments used in \cite[Lemma 2.7]{Krishnan2019a} we can conclude that
			\[ J_{i_1j_1} \cdots J_{i_{m-\ell+1}j_{m-\ell+1}}  \sigma(i_1\dots i_{\ell+1}) \Bigg(\sum\limits_{p=0}^{\ell+1}\frac{1}{(\ell+1-p)!}\,\frac{\partial^{\ell+1} \l\xi,\PD_x\r^{\ell+1-p}}{\partial x^{i_1}\dots\partial x^{i_p}\partial\xi^{i_{p+1}}\dots\partial\xi^{i_{\ell+1}}}\Bigg) \psi^{\ell+1}=0.    \]
			Thus  knowing \eqref{modified johns condition} for some $ m $ with $ 0\le \ell\le k<m $,  with the help of induction we are proving \eqref{modified johns condition} for $ m+1 $ with $ 1\le \ell+1\le k+1<m+1 $. Thus for $ m+1 $, $ \ell=0 $ case is still left. $ \ell=0 $  case  follows from the fact that, $ \l \xi,\PD_x\r $ commutes with John's operator $ J_{ij} $, $\l\xi,\partial_x\r^{k}\psi^{k}=				(-1)^{k}\,(k)!\,\psi^{0}  $ and \eqref{Eq36}.
	This completes the proof.
			\end{proof}
\begin{lemma}\label{Lm 8.2}
For $ 0\le p\le \ell-1 $, if $ \psi^p $ is given by \eqref{eq:relation between psi and gs}, % \[ \psi^p= \sum_{s=0}^{p}(-1)^{s}\binom{p}{s}\, s!\,  J^{p-s} g_{s},\] 
then
		\begin{equation}
		\begin{aligned}
		\Psi_{i_1\dots i_\ell}=  \frac{1}{\binom{m}{\ell}}\frac{\PD^{\ell}}{\PD x^{i_1}\cdots x^{i_{\ell}}} \chi^{\ell} + \frac{1}{\binom{m}{\ell}} \sigma(i_1\dots i_{\ell}) \sum_{s=0}^{\ell-1} \binom{m-s}{\ell-s}\, \frac{\PD^{s} ((J^0g_s)_{m-\ell})_{i_1\dots i_{\ell-s}}}{\PD x^{i_{\ell-s+1}}\cdots \PD x^{i_{\ell}}}
		\end{aligned}
		\end{equation}
		where $ \Psi_{i_1\dots i_\ell} $ is given by \eqref{eq:John's condition}.  
	\end{lemma}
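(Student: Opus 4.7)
The plan is to substitute the explicit expression \eqref{eq:relation between psi and gs} for $\psi^p$ into the definition \eqref{eq:John's condition} of $\Psi_{i_1\dots i_\ell}$, isolate the single piece where $\chi^\ell$ naturally appears, and repackage the remaining terms using Lemma \ref{inversion}. Specifically, for $0\le p\le \ell-1$ I would write $\psi^p=\sum_{s=0}^{p}(-1)^s\binom{p}{s}s!\,J^{p-s}g_s$, while for $p=\ell$ I would use the identity supplied by Lemma \ref{Lm5.1} in the form
\[
\psi^\ell=(-1)^\ell\,\ell!\,\chi^\ell+\sum_{s=0}^{\ell-1}(-1)^s\binom{\ell}{s}s!\,J^{\ell-s}g_s,
\]
so that $\chi^\ell$ plays the role of $J^0g_\ell$ in the algebra.

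Inserting these into $\Psi_{i_1\dots i_\ell}$, the term containing $\chi^\ell$ comes solely from $p=\ell$, where all $\ell$ derivatives are with respect to $x$. Its coefficient is $\frac{(m-\ell)!}{m!}\cdot(-1)^\ell\binom{\ell}{\ell}\cdot(-1)^\ell\ell!=\frac{1}{\binom{m}{\ell}}$, yielding exactly the first term $\frac{1}{\binom{m}{\ell}}\,\partial^\ell\chi^\ell/\partial x^{i_1}\cdots\partial x^{i_\ell}$ claimed in the lemma (the outer symmetrization $\sigma(i_1\dots i_\ell)$ is trivial on iterated partial derivatives).

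For the remaining piece, I would swap the order of summation: for each fixed $s\in\{0,\dots,\ell-1\}$, I collect a sum over $p$ from $s$ to $\ell$. Setting $q=p-s$ and using the identity $\binom{\ell}{s+q}\binom{s+q}{s}=\binom{\ell}{s}\binom{\ell-s}{q}$, the inner sum becomes
\[
(-1)^s\binom{\ell}{s}\sum_{q=0}^{\ell-s}(-1)^q\binom{\ell-s}{q}\frac{\partial^{\ell}\,J^{q}g_s}{\partial x^{i_1}\cdots\partial x^{i_{s+q}}\partial\xi^{i_{s+q+1}}\cdots\partial\xi^{i_\ell}},
\]
which, after pulling out $\partial^s/\partial x^{i_1}\cdots\partial x^{i_s}$, is precisely the combination of Lemma \ref{inversion} applied to the $(m-s)$-tensor field $g_s$ with $r=\ell-s$ and indices $i_{s+1},\dots,i_\ell$. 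The outer $\sigma(i_1\dots i_\ell)$ absorbs the auxiliary $\sigma(i_{s+1}\dots i_\ell)$ of Lemma \ref{inversion}, producing the factor $\frac{(m-s)!}{(m-\ell)!}(J^0(g_s)_{m-\ell})_{i_{s+1}\dots i_\ell}$. A further relabeling of which $s$ indices carry the $x$-derivatives, again justified by the outer symmetrization, rewrites this as $\frac{\partial^s(J^0(g_s)_{m-\ell})_{i_1\dots i_{\ell-s}}}{\partial x^{i_{\ell-s+1}}\cdots\partial x^{i_\ell}}$.

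All that remains is to check the overall constant: collecting factors gives
\[
\frac{(m-\ell)!}{m!}\cdot s!\binom{\ell}{s}\cdot\frac{(m-s)!}{(m-\ell)!}=\frac{\ell!(m-s)!}{m!(\ell-s)!}=\frac{1}{\binom{m}{\ell}}\binom{m-s}{\ell-s},
\]
exactly the coefficient appearing in the statement. The computation is otherwise routine; the main technical obstacle is keeping the index bookkeeping honest—specifically, justifying the two reorganizations of symmetrized indices (first to apply Lemma \ref{inversion}, then to display the derivatives in the form stated). Both are legitimate because $\sigma(i_1\dots i_\ell)$ commutes with inserting partial symmetrizations on any subset of its arguments and with relabeling indices inside its orbit.
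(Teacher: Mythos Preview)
Your proposal is correct and follows essentially the same route as the paper: substitute \eqref{eq:relation between psi and gs} into \eqref{eq:John's condition}, swap the order of summation, and recognize each inner $q$-sum as an instance of Lemma~\ref{inversion} for the $(m-s)$-tensor $g_s$ with $r=\ell-s$. Your only organisational difference is that you rewrite $\psi^\ell$ via $\chi^\ell$ at the outset (so the $q$-sums run all the way to $\ell-s$ and Lemma~\ref{inversion} applies directly), whereas the paper keeps $\psi^\ell$ separate and then adds and subtracts the missing $q=\ell-s$ term; note also that in your displayed ``inner sum'' the prefactor should be $s!\binom{\ell}{s}$ rather than $(-1)^s\binom{\ell}{s}$, which you in fact use correctly in your final coefficient check.
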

	\begin{proof}
To prove this lemma, we need to compute the term in parenthesis on the right hand side of equation \eqref{eq:John's condition} using the expression for $\psi^\ell =  \sum_{s=0}^{p}(-1)^{s}\binom{p}{s}\, s!\,  J^{p-s} g_{s}$. Consider
\begin{align*}
 	\sum_{p=0}^{\ell -1}(-1)^p\binom{\ell}{p}\frac{\partial^\ell \psi^p}{\partial x^{i_1}\dots\partial x^{i_p}\partial\xi^{i_{p+1}}\dots\partial\xi^{i_\ell}}&=\sum_{p=0}^{\ell -1} \sum_{s=0}^{p}(-1)^{p-s}\binom{p}{s} \binom{\ell}{p}\, s!\, \frac{\partial^\ell J^{p-s}g_s }{\partial x^{i_1}\dots\partial x^{i_p}\partial\xi^{i_{p+1}}\dots\partial\xi^{i_\ell}}\\
 	&=\sum_{s=0}^{\ell -1} \underbrace{\sum_{p=s}^{\ell-1}(-1)^{p-s}\binom{p}{s}\binom{\ell}{p}\, s!\, \frac{\partial^\ell J^{p-s}g_s }{\partial x^{i_1}\dots\partial x^{i_p}\partial\xi^{i_{p+1}}\dots\partial\xi^{i_\ell}}}_{\Jc_s}.
\end{align*}
First, let us focus on $\Jc_s$;
\begin{align*}
\Jc_s &= \sum_{p=s}^{\ell-1}(-1)^{p-s}\binom{p}{s}\binom{\ell}{p}\, s!\, \frac{\partial^\ell J^{p-s}g_s }{\partial x^{i_1}\dots\partial x^{i_p}\partial\xi^{i_{p+1}}\dots\partial\xi^{i_\ell}}\\
&=\sum_{p=0}^{\ell-s-1} (-1)^{p} \binom{p+s}{s}\, s! \binom{\ell}{p+s}\frac{\partial^\ell J^{p}g_s}{\partial x^{i_1}\dots\partial x^{i_{p+s}}\partial\xi^{i_{p+s+1}}\dots\partial\xi^{i_\ell}}\nonumber\\
		& = \sum_{p=0}^{\ell-s-1} (-1)^p \frac{\ell!}{p!(\ell-p-s)!}\,\frac{\partial^\ell J^{p}g_s}{\partial x^{i_1}\dots\partial x^{i_{p+s}}\partial\xi^{i_{p+s+1}}\dots\partial\xi^{i_\ell}}\nonumber\\
		&= \frac{\ell!}{(\ell-s)!}\sum_{p=0}^{\ell-s-1} (-1)^p \frac{(\ell-s)!}{p!(\ell-p-s)!}\,\frac{\partial^\ell J^{p}g_s}{\partial x^{i_1}\dots\partial x^{i_{p+s}}\partial\xi^{i_{p+s+1}}\dots\partial\xi^{i_\ell}}\nonumber\\
		&=\frac{\ell!}{(\ell-s)!} \sum_{p=0}^{\ell-s-1} (-1)^p \binom{\ell-s}{p} \,\frac{\partial^\ell J^{p}g_s}{\partial x^{i_1}\dots\partial x^{i_{p+s}}\partial\xi^{i_{p+s+1}}\dots\partial\xi^{i_\ell}}.
\end{align*} 
Next, recall the Lemma \ref{inversion} with values $ m=m-s $ and $r=\ell-s $ in equation \eqref{inversion_general} gives 
		\begin{align*}
		(J^0h_{m-\ell})_{i_1\dots i_{\ell-s}}= \frac{ (m-\ell)!}{(m-s)!}\sigma(i_1\dots i_{\ell-s})\sum_{p=0}^{\ell-s}(-1)^p\binom{\ell-s}{p}\, \frac{\partial^{\ell-s} J^ph}{\partial x^{i_1}\dots\partial x^{i_p}\partial\xi^{i_{p+1}}\dots\partial\xi^{i_{\ell-s}}}.
		\end{align*}
		Differentiating this relation $s$ times with respect to $ x^{i_{\ell-s+1}},\dots,x^{i_{\ell}}$ respectively and by applying the symmetrization  $\sigma(i_1\dots i_{\ell}) $, we obtain
		\begin{align*}\label{Eq 8.26}
		\sigma(i_1\dots i_{\ell}) \frac{\PD^{s} (J^0h_{m-\ell})_{i_1\dots i_{\ell-s}}}{\PD x^{i_{\ell-s+1}}\cdots \PD x^{i_{\ell}}}
		= \frac{ (m-\ell)!}{(m-s)!}\sigma(i_1\dots i_{\ell})\sum_{p=0}^{\ell-s}(-1)^p\binom{\ell-s}{p}\, \frac{\partial^{\ell} J^p h}{\partial x^{i_1}\dots\partial x^{i_{p+s}}\partial\xi^{i_{p+s+1}}\dots\partial\xi^{i_{\ell}}}.
		\end{align*}
This identity for $h = g_s$ reduces the expression for $\Jc_s$ to 
\begin{align*}
	\frac{(m-\ell)!}{m!}\sigma(i_1\dots i_\ell)	\Jc_s=  \frac{(-1)^{\ell-s+1}}{\binom{m}{\ell}(\ell-s)!}\frac{\partial^{\ell} J^{p}g_s}{\partial x^{i_1}\dots\partial x^{i_{\ell}}}+\frac{\binom{m-s}{\ell-s}}{\binom{m}{\ell}}\sigma(i_1\dots i_{\ell}) \frac{\PD^{s} ((J^0g_s)_{m-\ell})_{i_1\dots i_{\ell-s}}}{\PD x^{i_{\ell-s+1}}\cdots \PD x^{i_{\ell}}}.
\end{align*}
Now from Lemma \ref{John's condition }, we have 
\begin{align*}
			\Psi_{i_1\dots i_\ell}&= \frac{ (m-\ell)!}{m!}\sigma(i_1\dots i_\ell) \Bigg(\sum\limits_{p=0}^{\ell}(-1)^p\binom{\ell}{p}\,\frac{\partial^\ell \psi^p}{\partial x^{i_1}\dots\partial x^{i_p}\partial\xi^{i_{p+1}}\dots\partial\xi^{i_\ell}}\Bigg)\\
			&=\frac{ (m-\ell)!}{m!}\sigma(i_1\dots i_\ell) \sum\limits_{s=0}^{\ell-1}\Jc_s  +\frac{ (m-\ell)!}{m!}(-1)^\ell \frac{\partial^\ell \psi^p}{\partial x^{i_1}\dots\partial x^{i_\ell}}\\
&=\frac{(-1)^\ell}{\binom{m}{\ell}\ell!}\sum_{s=0}^{\ell-1} \frac{(-1)^{s+1}\ell!}{(\ell-s)!}\frac{\partial^{\ell} J^{p}g_s}{\partial x^{i_1}\dots\partial x^{i_{\ell}}}+\sigma(i_1\dots i_{\ell})\sum_{s=0}^{\ell-1}\frac{\binom{m-s}{\ell-s}}{\binom{m}{\ell}} \frac{\PD^{s} ((J^0g_s)_{m-\ell})_{i_1\dots i_{\ell-s}}}{\PD x^{i_{\ell-s+1}}\cdots \PD x^{i_{\ell}}}+ \frac{(-1)^\ell}{\binom{m}{\ell}\ell!} \, \frac{\partial^\ell \psi^{\ell}}{\partial x^{i_1}\dots\partial x^{i_{\ell}}}\nonumber\\
	&=\frac{(-1)^\ell}{\binom{m}{\ell}\ell!}\frac{\partial^{\ell} }{\partial x^{i_1}\dots\partial x^{i_{\ell}}}\left(\psi^{\ell}-\sum_{s=0}^{\ell-1} (-1)^{s}\binom{\ell}{s}\,s!\,J^pg_s\right)+\sigma(i_1\dots i_{\ell})\sum_{s=0}^{\ell-1}\frac{\binom{m-s}{\ell-s}}{\binom{m}{\ell}} \frac{\PD^{s} ((J^0g_s)_{m-\ell})_{i_1\dots i_{\ell-s}}}{\PD x^{i_{\ell-s+1}}\cdots \PD x^{i_{\ell}}}\\
	&=\frac{1}{\binom{m}{\ell}}\frac{\PD^{\ell}}{\PD x^{i_1}\cdots x^{i_{\ell}}} \chi^{\ell} + \frac{1}{\binom{m}{\ell}} \sigma(i_1\dots i_{\ell}) \sum_{s=0}^{\ell-1} \binom{m-s}{\ell-s}\, \frac{\PD^{s} ((J^0g_s)_{m-\ell})_{i_1\dots i_{\ell-s}}}{\PD x^{i_{\ell-s+1}}\cdots \PD x^{i_{\ell}}}.			
\end{align*}
This completes the proof of Lemma \ref{Lm 8.2}.
\end{proof}	
\begin{lemma}\label{restriction on Schwartz space}
	If $\psi^r$ is given by \eqref{eq:relation between psi and gs} for $ 0\le r \le \ell -1 $,	then for every indices $ 1\le i_1,j_1,\cdots, i_{m-\ell+1}, j_{m-\ell+1}\le n$, 
	\[ \left(	J_{i_1j_1} \cdots J_{i_{m-\ell+1}j_{m-\ell+1}} \chi^{\ell} \right)\Bigg|_{\tn}\in \mathcal{S}(\tn). \]
	\end{lemma}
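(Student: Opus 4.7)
The plan is to invoke Lemma \ref{Lm 4.2} with $\psi=\chi^{\ell}$, which would imply that every mixed partial derivative of $\chi^{\ell}$ has Schwartz restriction to $\tn$; since $J_{i_1j_1}\cdots J_{i_{m-\ell+1}j_{m-\ell+1}}\chi^{\ell}$ is a finite $\Rb$-linear combination of such mixed partials (each factor $J_{ij}$ being a second-order mixed partial), the conclusion follows immediately. To feed Lemma \ref{Lm 4.2} I would verify three points: (a) $\chi^{\ell}$ is positively homogeneous in $\xi$, which is immediate from \eqref{homogenety w r to xi} (degree $m-\ell-1$); (b) $\chi^{\ell}|_{\tn}\in\mathcal{S}(\tn)$, which holds by Lemma \ref{extension lemma} together with the fact that $\widetilde{\chi}^{\ell}$ is a linear combination of the Schwartz functions $\vf^{\ell}$ and $I^{\ell-s}g_{s}$ (the latter is Schwartz since $g_{s}\in\mathcal{S}(S^{m-s})$ by the inductive construction of the $g_s$'s); and (c) that the restriction of $\langle\xi,\partial_x\rangle\chi^{\ell}$ and of all its derivatives lie in $\mathcal{S}(\tn)$.

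The crux of the argument is (c), which I would establish in the strongest possible form by showing the pointwise identity
\[
\langle\xi,\partial_x\rangle\chi^{\ell}\equiv 0 \qquad\text{on } \Rn\times(\Rn\setminus\{0\}).
\]
I would apply $\langle\xi,\partial_x\rangle$ termwise to the defining formula \eqref{eq:definition_of_chi_l}. Lemma \ref{Lm 4.1} with $k=\ell$ gives $\langle\xi,\partial_x\rangle\psi^{\ell}=-\ell\,\psi^{\ell-1}$, while a single integration by parts along the line $t\mapsto x+t\xi$ (valid because each $g_s$ is Schwartz) yields the analogous identity $\langle\xi,\partial_x\rangle J^{q}h=-q\,J^{q-1}h$ for any symmetric tensor field $h\in\mathcal{S}$, so that $\langle\xi,\partial_x\rangle J^{\ell-s}g_{s}=-(\ell-s)\,J^{\ell-s-1}g_{s}$. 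Substituting these and using the elementary identity $\binom{\ell}{s}(\ell-s)=\ell\binom{\ell-1}{s}$ collapses the computation to
\[
\langle\xi,\partial_x\rangle\chi^{\ell}
=\frac{(-1)^{\ell+1}}{(\ell-1)!}\Bigg(\psi^{\ell-1}-\sum_{s=0}^{\ell-1}(-1)^{s}\binom{\ell-1}{s}s!\,J^{\ell-1-s}g_{s}\Bigg),
\]
and the parenthesis is precisely the relation \eqref{eq:relation between psi and gs} at level $r=\ell-1$, which holds by the standing hypothesis of the lemma. Hence the right-hand side vanishes.

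With $\langle\xi,\partial_x\rangle\chi^{\ell}\equiv 0$, condition (c) of Lemma \ref{Lm 4.2} is satisfied vacuously, and that lemma then guarantees
\[
\left.\frac{\partial^{p+q}\chi^{\ell}}{\partial x^{i_1}\cdots\partial x^{i_p}\partial\xi^{j_1}\cdots\partial\xi^{j_q}}\right|_{\tn}\in\mathcal{S}(\tn)
\quad\text{for all }1\le i_1,\dots,i_p,j_1,\dots,j_q\le n.
\]
Expanding each factor $J_{i_rj_r}=\partial^{2}/(\partial x^{i_r}\partial\xi^{j_r})-\partial^{2}/(\partial x^{j_r}\partial\xi^{i_r})$ writes $J_{i_1j_1}\cdots J_{i_{m-\ell+1}j_{m-\ell+1}}\chi^{\ell}$ as a finite sum of such mixed partials of $\chi^{\ell}$; restricting to $\tn$ preserves linearity, and the claim follows. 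The principal (and essentially only) obstacle is the binomial-coefficient bookkeeping producing $\langle\xi,\partial_x\rangle\chi^{\ell}=0$; once this algebraic cancellation is in hand, Lemma \ref{Lm 4.2} carries out all the remaining analytic work.
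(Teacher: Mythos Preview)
Your proposal is correct and follows essentially the same approach as the paper: verify the hypotheses of Lemma~\ref{Lm 4.2} for $\psi=\chi^{\ell}$, the crucial one being $\langle\xi,\partial_x\rangle\chi^{\ell}=0$, and then conclude that all mixed partials of $\chi^{\ell}$ restrict to Schwartz functions on $\tn$. The only noteworthy difference is in how you obtain the vanishing of $\langle\xi,\partial_x\rangle\chi^{\ell}$: the paper simply differentiates the translation invariance \eqref{translation of chi l} (already established in Lemma~\ref{Lm5.1}) with respect to $t$ and evaluates at $t=0$, whereas you compute $\langle\xi,\partial_x\rangle\chi^{\ell}$ termwise and observe that the resulting expression is exactly the hypothesis $\psi^{\ell-1}=\sum_{s=0}^{\ell-1}(-1)^s\binom{\ell-1}{s}s!\,J^{\ell-1-s}g_s$; both routes are valid, but the paper's is shorter since Lemma~\ref{Lm5.1} has already packaged the algebra for you.
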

\begin{proof}
	This proof follows from repeated application of \cite[Statement 2.10]{Krishnan2019a}.  We know from equation \eqref{homogenety w r to xi} of Lemma \ref{Lm5.1} that the function $ \chi^{\ell}(x,\xi) $ is positively homogeneous of degree $ \lambda=m-\ell-1 $ in its second variable. Also, by definition 
	\[ \chi^{\ell}\Big|_{\tn}\in \mathcal{S}(\tn).\]
	Differentiating \eqref{translation of chi l} with respect to $ t $, we obtain
	\begin{align*}
	\frac{d}{d t} \chi^{\ell}(x+t\xi,\xi)& = \l \xi,\PD_x\r \chi^{\ell}(x+t\xi,\xi) =0\\
	\Longrightarrow\qquad \qquad  \left.\frac{\d}{\d\, t} \chi^{\ell}(x+t\xi,\xi) \right|_{t =0}&= \l \xi,\PD_x\r \chi^{\ell}(x,\xi) =0.
	\end{align*}
	
	This implies $ \l \xi,\PD_x\r \chi^{\ell}(x,\xi) $ and all its derivatives with respect to $ x_j $'s and $ \xi_j $'s restricted to $ \tn $ belong to $ \mathcal{S}(\tn) $. Thus $ \chi^{\ell} $ satisfies the hypotheses of Lemma \ref{Lm 4.2} for $ k=p=m-\ell-1 $ and we get
	\begin{equation}\label{Eq5.27}
	\left(	J_{i_1j_1} \cdots J_{i_{m-\ell+1}j_{m-\ell+1}} \chi^{\ell} \right)\Bigg|_{\tn}\in \mathcal{S}(\tn).
	\end{equation}
	This finishes the proof.
\end{proof}

\subsection{Proof of Theorem \ref{th:range characterisation for I-k}}
\textbf{Proof of necessity.} 
To prove the necessary part of the theorem, let us assume that $(\vf^0, \vf^1, \dots, \vf^k) \in (\Sc(\tn))^{k+1}$ is in the range of operator $\Ic^k$, that is, there exists $ f \in \mathcal{S}(S^m)$ such that 
		\begin{align}\label{eq: relation between phil and Il}
	I^\ell f(x, \xi)=	\vf^{\ell}(x,\xi)= \int\limits_{-\infty}^\infty t^{\ell}\langle f(x+t\xi),\xi^m\rangle \,dt, \qquad \mbox{ for } 0 \leq \ell \leq k.
		\end{align}
Then the first condition \textit{(1)} of Theorem \ref{th:range characterisation for I-k}, $ \vf^{\ell}(x,-\xi) = (-1)^{m-\ell} \vf^{\ell}(x,\xi)$ for $ \ell=0,1,\dots,k$,    can be verified by a straight forward substitution ($\xi \mapsto -\xi$). And the second condition $\textit{(2)}$ of Theorem \ref{th:range characterisation for I-k} follows from \cite[Lemma 2.5]{Krishnan2019a}, which can be proved by a direct computation too. \vspace{2.5 mm}\\
\noindent \textbf{Proof of sufficiency.}  Assume $(\vf^0, \vf^1, \dots, \vf^k) \in (\Sc(\tn))^{k+1}$  satisfy the properties \textit{(1)} and \textit{(2)}, then our aim is to find a $f  \in \Sc(S^m)$ such that equation \eqref{eq: relation between phil and Il} holds. Our idea is to construct a tensor field $f$ of the form 
		\begin{equation}\label{Eq5.1}
		f= \sum_{s=0}^{k}\d^{s} g_{s}, \quad \mbox{ where } g_s \in \Sc(S^{m-s}), \mbox{ for } s = 0, 1, \dots , k
		\end{equation} 
which will be equivalent to find tensor fields $g_s$, for $0 \leq s \leq k$, satisfying the following relation (see the discussion in the first two paragraphs of subsection \ref{subsec:lemmas for range characterization}): 
		\begin{equation}\label{Eq5.2}
		\begin{aligned}
		\vf^\ell=I^\ell\!f &= \sum_{s=0}^{\ell} (-1)^s\,
		\binom{\ell}{s}\, s! \,I^{\ell-s} g_s,  \qquad \mbox{ for }  0\le \ell\le k.
			\end{aligned}
		\end{equation}
To obtain required tensor fields $g_s$, we are going to use Mathematical induction and the Theorem \ref{Th3.1} successively. In particular, we show that the function $ \chi^{\ell} \in C^{\infty}(\Rn \times\Rn\setminus{0})$ (for each fixed $0\leq  \ell \le k$)  defined  in Lemma \ref{Lm5.1} by
	  \begin{equation}\label{definition_of_chi_l}
	  \chi^{\ell}=\frac{(-1)^\ell}{\ell!} \left(\psi^\ell - \sum_{s=0}^{\ell-1}(-1)^{s}\,\binom{\ell}{s}\,s! \,J^{\ell-s} g_{s}\right)
	  \end{equation}
	  satisfies the hypotheses of the Theorem \ref{Th3.1} when $m$ replaced by $ m-\ell$. Then by applying Theorem \ref{Th3.1} on $\chi^{\ell}$, we will prove the existence of tensor fields $g_s$ ($0\leq s \le k$) iteratively. \vspace{2 mm}\\ 
\textbf{Case $\ell =0$:} For $ \ell=0 $, we have  $ \chi^0=\psi^0 $. Also, the statement of theorem (equation \eqref{Johns's condition for psi k}) gives
			\[  J_{i_{m+1}j_{m+1}}\cdots J_{i_2j_2} J_{i_1j_1} \psi^k=0.  \]
Using \eqref{Eq4.1} for $\ell=k$ together with the fact that the operators $ \l \xi, \PD_{x}\r $ and $ J_{ij} $ commute, we get
			\[J_{i_{m+1}j_{m+1}}\cdots J_{i_2j_2} J_{i_1j_1} \psi^0=0. \]
			Hence, we can apply Theorem \ref{Th3.1} on $ \chi^0 $ to get $ g_0\in \mathcal{S}(S^m) $ satisfying \[ I^0g_0 =\vf^0 \quad \mbox{and}\quad J^0g_0=\psi^0=\chi^0. \] 
			Thus, we have proved that our claim about the function $\chi^\ell$ is valid for $ \ell=0 $.\vspace{2 mm}\\ 
\textbf{Induction hypothesis:} Assume that the function $ \chi^{r}$ satisfies the properties \textit{(1)} and \textit{(2)} of Theorem \ref{Th3.1} and hence shows the existence of $g_{r}\in \mathcal{S}(S^{m-r})$ such that 
	 \begin{equation*}
	 \chi^{r} = J^0g_r= \frac{(-1)^r}{r!} \left(\psi^r - \sum_{s=0}^{r-1}(-1)^{s}\binom{r}{s}s!\, J^{r-s} g_{s}\right), \qquad \mbox{ for } 0\leq r \leq \ell -1. 
	 \end{equation*}
	 This implies
	 \begin{equation}\label{Eq 8.13}
	 \begin{aligned}
	 \psi^r=&\sum_{s=0}^{r-1}(-1)^{s}\binom{r}{s}\,s!\, J^{r-s} g_{s}+ (-1)^r r! J^0g_r
	 =&\sum_{s=0}^{r}(-1)^{s}\binom{r}{s}s!\,J^{r-s} g_{s}.
	  \end{aligned}
	 \end{equation}
Here, we use Lemma \ref{John's condition } to get the following relation \[J_{i_1j_1} \cdots J_{i_{m-\ell+1}j_{m-\ell+1}} \Psi_{i_1\cdots i_{\ell}} =0.\]
		This together with Lemma \ref{Lm 8.2} entails
		\begin{align}\label{Eq8.5}
		J_{i_1j_1} \cdots J_{i_{m-\ell+1}j_{m-\ell+1}}\left(  \frac{\PD^{\ell}}{\PD x^{i_1}\cdots x^{i_{\ell}}} \chi^{\ell} +  \sigma(i_1\dots i_{\ell}) \sum_{s=0}^{\ell-1}\binom{m-s}{\ell-s} \frac{\PD^{s} ((J^0g_s)_{m-\ell})_{i_1\dots i_{\ell-s}}}{\PD x^{i_{\ell-s+1}}\cdots \PD x^{i_{\ell}}}\right)=0.
		\end{align}
The operator $J_{ij}$ is a constant coefficients differential operator and commutes with partial derivatives (in fact with any constant coefficient differential operator). Therefore \eqref{Eq8.5} is equivalent to the equation
		\begin{equation}\label{Eq8.6}
\begin{aligned}
	&	\frac{\PD^{\ell}}{\PD x^{i_1}\cdots x^{i_{\ell}}} \left(	J_{i_1j_1} \cdots J_{i_{m-\ell+1}j_{m-\ell+1}} \chi^{\ell}\right) \\&\qquad \quad +  \sigma(i_1\dots i_{\ell}) \sum_{s=0}^{\ell-1} \binom{m-s}{\ell-s} \frac{\PD^{s} }{\PD x^{i_{\ell-s+1}}\cdots \PD x^{i_{\ell}}} \left( J_{i_1j_1} \cdots J_{i_{m-\ell+1}j_{m-\ell+1}} ((J^0g_s)_{m-\ell})_{i_1\dots i_{\ell-s}}\right)=0.
		\end{aligned}
		\end{equation} 
The second term on the left side of above equation is zero from the Proposition \ref{general johns condition}. Thus the above relation reduces to
		\begin{equation}\label{Eq 8.14}
		\frac{\PD^{\ell}}{\PD x^{i_1}\cdots x^{i_{\ell}}}\left(J_{i_1j_1} \cdots J_{i_{m-\ell+1}j_{m-\ell+1}}  \chi^{\ell} \right)=0.
		\end{equation}
For a fixed $ \xi \ne 0 $, the Proposition \ref{restriction on Schwartz space} implies  that  the restriction of $ 	\left(J_{i_1j_1} \cdots J_{i_{m-\ell+1}j_{m-\ell+1}}  \chi^{\ell} \right)(\cdot,\xi) $ on the hyperplane  $ \xi^{\perp}=\{x\in \Rn: \l x,\xi \r=0 \}  $ belongs to $ \mathcal{S}(\xi^{\perp}) $. Moreover,  \eqref{Eq 8.14} implies that the restriction is itself zero. Since $ \xi\ne 0 $ is arbitrary, therefore		\begin{equation*}%\label{johns indentity for chi l}
		J_{i_1j_1} \cdots J_{i_{m-\ell+1}j_{m-\ell+1}}\chi^{\ell}=0.
		\end{equation*}
		Lemma \ref{extension lemma} gives the function $ \chi^{\ell} $ obtained from $ \tilde{\chi}^{\ell} =\chi^{\ell}|_{\tn}$ by the following rule 
		\[ \chi^{\ell}=|\xi|^{m-\ell-1}\,\widetilde{\chi}^{\ell}\left( x-\frac{\l x,\xi \r }{|\xi|^2}\xi, \frac{\xi}{|\xi|}   \right), \]
		where $\tilde{\chi}^{\ell}= \frac{(-1)^l}{l!} \left(\vf^l - \sum_{s=0}^{\ell-1}(-1)^{s}\binom{\ell}{s}\,s!\, I^{\ell-s} g_{s}\right) \in \mathcal{S}(\tn)$.  And	equation \eqref{homogenety w r to xi} gives 
	\[\chi^{\ell}(x,-\xi)= (-1)^{m-\ell} \chi^{\ell}(x,\xi). \]
	This implies $ \tilde{\chi}^{\ell}=\chi^{\ell}|_{\tn} $ also enjoys the same. Thus we have prove that if \eqref{Eq 8.13} holds, then the function $ \chi^{\ell} $ satisfies the hypotheses of Theorem \ref{Th3.1} for $ m-\ell $ tensor fields. With the help of second principle of induction, there exists a $ g_{\ell}\in \mathcal{S}(S^{m-\ell}) $ such that
	\begin{equation}\label{identity for chi l}
	J^0g_{\ell}=\chi^{\ell} \quad I^0g_{\ell}=\chi^{\ell}|_{\tn}
	\end{equation}
holds 	for $ \ell =0,\dots,k $. Thus, the tensor field $ f $ defined by \eqref{Eq5.1} gives 
		\[ I^{\ell}f=\vf^{\ell};\quad \ell=0,1,\dots,k. \]
		This completes the proof of sufficient part and Theorem \ref{th:range characterisation for I-k} as well.
\begin{remark}
\begin{itemize}
\item We already know from Theorem \ref{th:decomp_in_original_sp} that the operator $\Ic^k$ has an infinite dimensional kernel containing all $(k+1)$-potential field (tensor fields of the form $ f=\d^{k+1}v $ for some $ v\in\mathcal{S}(S^{m-k-1})$). This was the primary motivation to define $ f $ in the form of \eqref{Eq5.1} for the proof of Theorem \ref{th:range characterisation for I-k}.
\item The techniques used in this work to prove the range characterization for the operator $\Ic^k$  is different from the one used in \cite[Theorem 1.3]{Krishnan2019a}. In fact for the  particular case $k =m$, our result provides a new proof of \cite[Theorem 1.3]{Krishnan2019a}. Hence, the Theorem \ref{th:range characterisation for I-k} can be thought of as a  generalization of \cite[Theorem 1.3]{Krishnan2019a}.
\end{itemize}
\end{remark}

\bibliographystyle{plain}
\bibliography{reference}
	
\end{document}